\documentclass[12pt]{amsart}
\usepackage{verbatim}
\usepackage{amssymb}
\usepackage{amsbsy}
\usepackage{amscd}
\usepackage{amsmath}
\usepackage{amsthm}
\usepackage{amsxtra}
\usepackage{latexsym}
\usepackage[mathscr]{eucal}
\usepackage{upgreek}
\usepackage{wasysym}
\usepackage[all,cmtip]{xy}
\usepackage[dvipdfm, colorlinks]{hyperref}

\newcommand{\g}{\mathfrak{g}}



\newcommand{\mc}{\mathcal}
\newcommand{\mf}{\mathfrak}

\newcommand{\on}{\operatorname}




\newcommand{\W}{\mathscr{W}}


\newcommand{\affg}{\widehat{\mathfrak{g}}}

\newcommand{\fing}{\mathfrak{g}}

\newcommand{\BGG}{{\mathcal O}}

\newcommand{\N}{\mathbb{N}}
\newcommand{\Q}{\mathbb{Q}}


\newcommand{\bra}{{\langle}}
\newcommand{\ket}{{\rangle}}

\newcommand{\lam}{\lambda}
\newcommand{\ra}{\rightarrow}

\newcommand{\Z}{\mathbb{Z}}

\newcommand{\cprime}{$'$}

\renewcommand{\*}{{\otimes}}
\newcommand{\C}{\mathbb{C}}

\theoremstyle{plain}
\newtheorem{Th}{Theorem}[section]

\newtheorem{Pro}[Th]{Proposition}

\newtheorem{Lem}[Th]{Lemma}

\theoremstyle{definition}

\theoremstyle{remark}
\newtheorem{Def}[Th]{Definition}
\newtheorem{Rem}[Th]{Remark}



\DeclareMathOperator{\End}{End}
\DeclareMathOperator{\gr}{gr}

\DeclareMathOperator{\ad}{ad}

\title[Quasi-lisse vertex algebras and MLDE]{Quasi-lisse vertex algebras and  modular linear differential equations}
\author{Tomoyuki Arakawa}
\address{Research Institute for Mathematical Sciences, Kyoto University,
 Kyoto 606-8502 JAPAN} 
\email{arakawa@kurims.kyoto-u.ac.jp}
\address{Department of Mathematics, MIT 
77 Massachusetts Ave 
Cambridge MA 02139 USA}

\author{Kazuya Kawasetsu}
\address{School of Mathematics and Statistics, The University of Melbourne, 3010 AUSTRALIA}
\email{kazuya.kawasetsu@unimelb.edu.au}

\subjclass[2010]{17B69, 17B67, 11F22, 81R10}

 \keywords{Vertex algebras, Modular linear differential equations, Quasimodular forms, Affine Kac-Moody algebras, Affine $\mathcal{W}$-algebras, 
 Associated varieties, Deligne exceptional series,
 Schur limit of superconformal index}


\begin{document}

\maketitle

\begin{center}
{\small {\em Dedicated to the great mathematician 	Bertram Kostant }}
\end{center}

\begin{abstract}
We introduce a notion of quasi-lisse vertex algebras,
which generalizes admissible affine vertex algebras.
We show that the normalized character of 
an ordinary module over 
a  quasi-lisse vertex operator algebra
has a modular invariance property,
in the sense that it
satisfies  a modular linear differential equation.
As an application
we obtain the explicit character formulas
of simple affine vertex algebras associated with the Deligne exceptional series
at level $-h^{\vee}/6-1$,
which express
the homogeneous Schur 
indices of 
4d SCFTs
studied by Beem,
Lemos,
Liendo,
Peelaers,
Rastelli and van Rees,
as quasi-modular forms.

\end{abstract}


\section{Introduction}
The vertex algebra $V$ is called {\em lisse}, or {\em $C_2$-cofinite},
if the dimension of the associated variety $X_V$ is zero.
For instance,  a simple affine vertex algebra $V$ associated with an affine Kac-Moody algebra $\widehat{\mathfrak{g}}$
is lisse  if and only if $V$ is an integrable representation as a $\widehat{\mathfrak{g}}$-module.
Thus, the lisse property generalizes the integrability condition to an arbitrary vertex algebra.

It is known that a lisse vertex operator algebra $V$ has nice properties, such as 
the modular invariance of characters \cite{Z,Miy04},
and most 
theories of vertex operator algebras have been  build under this finiteness condition
(see e.g.\  \cite{DLM00,Hua08}).
However, there do exist significant vertex algebras that do not satisfy the lisse condition.
For instance,
admissible affine vertex algebras 
do not satisfy the lisse condition unless they are integrable,
but nevertheless their representations are semisimple in category $\BGG$ (\cite{AdaMil95,A12-2})
and have the modular invariance property (\cite{KacWak89,AEkeren}).
Moreover, there are a huge number of
vertex algebras 
constructed in \cite{BeeLemLie15} from 
 four dimensional $N=2$ superconformal field theories (SCFTs),  whose character coincides with  the Schur limit of the superconformal index of the
 corresponding
 four dimensional theories.
These vertex algebras do not
  satisfy the lisse property in general either.

 In this paper we propose the {\em quasi-lisse} condition
 that generalizes the lisse condition.
More precisely,
we call a conformal vertex algebra $V$ quasi-lisse
if 
its associated variety $X_V$ has finitely many symplectic leaves.
For instance,
a simple affine vertex algebra $V$ associated with  $\affg$ is quasi-lisse if and only if $X_V$ is contained in the nilpotent cone $\mc{N}$ of $\fing$.
Therefore,
by \cite{FeiMal97,Ara09b},
all the admissible affine vertex algebras
are quasi-lisse.
Moreover,
the $W$-algebras obtained by quasi-lisse affine vertex algebras by the quantized Drinfeld-Sokolov  reduction (\cite{FF90,KacRoaWak03}) is quasi-lisse as well.
The vertex algebras 
 constructed from 
 4d SCFTs are also expected to be quasi-lisse, 
 since their associated varieties conjecturally 
 coincide with the Higgs branches of the corresponding four dimensional theories (\cite{R}).

We show that the normalized character of an ordinary representation of a  quasi-lisse vertex operator algebra
has a modular invariance property,
in the sense that it
satisfies  a {\em modular linear differential equation} (MLDE) (cf.~\cite{MMS}, \cite{KZ},
\cite{Mas07}, {\cite{Mil}, \cite{KNS} and~\cite{AKNS}).
This seems to be new even for an admissible affine vertex algebra.
Moreover,
using MLDE,
we obtain the explicit character formulas 
of simple affine vertex algebras associated with the Deligne exceptional series
$
A_1\subset A_2 \subset G_2 \subset D_4 \subset F_4 \subset E_6 \subset E_7 \subset E_8
$ (\cite{D})
at level $-h^{\vee}/6-1$.
These vertex algebras arose
in \cite{BeeLemLie15}
as $2d$ chiral algebras 
constructed from 
4d SCFTs\footnote{For types $G_2$ and $F_4$ the connection with 
4d SCFTs is conjectural.}.
Thus \cite{BeeLemLie15}, our result expresses  
 the homogeneous Schur indices of
 the corresponding 4d SCFTs as  {\em (quasi)modular forms}. 
This result is rather surprising
especially for types $D_4$,
$E_6$,
$E_7$ and $E_8$ (non-admissible casees), since the characters of these vertex algebras
are
written \cite{KasTan00} in terms of {\em non-trivial} Kazhdan-Lusztig polynomials 
as their highest weights are not regular dominant.

We note that in \cite{CorSha16}
the authors have obtained a  conjectural expression
of Schur indices in terms of 
Kontsevich-Soibelman
wall-crossing invariants,
which 
we hope to investigate  in future works.
\smallskip
\subsection*{Acknowledgments}
The first named author  thanks 
Victor Kac,
Anne Moreau,
Hiraku Nakajima,
Takahiro Nishinaka,
Leonardo 
  Rastelli,
  Shu-Heng Shao,
 Yuji Tachikawa
  and Dan Xie
  for  valuable discussion.
  He thanks
  Christopher Beem
  for pointing out an error 
  in the first version of this article.
  Some part of this work was done while he was visiting
  Academia Sinica, Taiwan, in August 2016, for
  ^^ ^^ Conference in Finite Groups and Vertex Algebras".
  He thanks  the organizers of the conference. 
  He is partially  supported 
by JSPS KAKENHI Grant Number
No.\ 20340007 and No.\ 23654006.
The second named author would like to thank Hiroshi Yamauchi for the helpful advice.
He was partially supported by
 JSPS KAKENHI Grant Number No.\ 14J09236.

This research was supported in part by Perimeter Institute for Theoretical Physics. Research at Perimeter Institute is supported by the Government of Canada through Industry Canada and by the Province of Ontario through the Ministry of Economic Development \& Innovation.
\section{Quasi-lisse vertex algebras}
Let $V$ be a conformal vertex algebra, 
$R_V=V/C_2(V)$  
the Zhu's $C_2$-algebra of $V$ (\cite{Z}),
where
 $C_2(V)=\bra a_{(-2)}b\mid a,b\in V\ket_{\C}$.
 The space $R_V$ is a Poisson algebra
 by
 \begin{align*}
\bar a\cdot \bar b=\overline{a_{(-1)}b},
\quad 
\{\bar a,\bar b\}=\overline{a_{(0)}b}.
\end{align*}
Here $\bar a$ denotes the image of $a\in V$ in $R_V$,
and $a(z)=\sum_{n\in \Z}a_{(n)}z^{-n-1}\in (\End V)[[z,z^{-1}]]$
is the quantum field corresponding to $a\in V$.
In this paper we assume $V$ is finitely strongly generated, that is,
$R_V$ is finitely generated.

The associated variety \cite{Ara12} of a vertex algebra $V$ is the finite-dimensional
algebraic variety defined
by 
$$X_V=\on{Specm}(R_V).$$ 
Since $R_V$ is a Poisson algebra, 
we have a finite partition $$X_V=\bigsqcup_{k=0}^rX_k,$$
where $X_k$ are smooth analytic Poisson varieties (see e.g. \cite{Brown:2003kq}).
Thus, for any point $x\in X_k$ there is a well defined symplectic leaf $\mathscr{L}_x\subset X_k$
through it.
\begin{Def}
A  finitely strongly generated  vertex algebra $V$ is called {\em quasi-lisse}
if $X_V$ has only finitely many symplectic leaves.
\end{Def}
Let $V$ be a quasi-lisse vertex algebra.
The finiteness of the symplectic leaves implies \cite{Brown:2003kq}
 that
 the symplectic leaf $\mathscr{L}_x$ 
 at $x\in X_V$
coincides with the regular locus of the zero variety of the maximal Poisson ideal 
contained in 
the maximal ideal corresponding to $x$.
Thus,  each leaf $\mathscr{L}_x$ is a smooth connected locally-closed {\em algebraic} subvariety in $X_V$.
In particular,
every irreducible component of $X_V$ is the closure of a symplectic leaf (\cite[Corollary 3.3]{Gin03}).

For us,  the importance  of the finiteness of the symplectic leaves 
 is in the following fact that has been established by Etingof and Schedler.
 \begin{Th}[\cite{EtiSch10}]\label{Th:ES}
Let $R$ be a finitely generated Poisson algebra,
and suppose that $\on{Specm}(R)$ has finitely many symplectic leaves.
Then
$$\dim R/\{R,R\}<\infty.$$
\end{Th}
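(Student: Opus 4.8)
The plan is to follow the $\D$-module method of Etingof and Schedler. Write $X=\Specm(R)$, and recall that the \emph{zeroth Poisson homology} $\on{HP}_0(R)=R/\{R,R\}$ has dual space equal to the space of \emph{Poisson traces} on $R$, i.e.\ the linear functionals $\phi$ with $\phi(\{a,b\})=0$ for all $a,b\in R$. The strategy is to attach to $X$ a canonical right $\D$-module $M(X)$, to identify $R/\{R,R\}$ with a cohomology group of the de Rham pushforward of $M(X)$ to a point, and then to show that the hypothesis on symplectic leaves forces $M(X)$ to be holonomic. Finite-dimensionality is then automatic from the general finiteness properties of holonomic $\D$-modules.

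First I would construct $M(X)$. If $X$ is smooth, with Poisson bivector $\pi\in\Gamma(X,\bw{2}TX)$, put
\[
M(X)=\D_{X}\big/\,\D_{X}\cdot\{\xi_{f}\mid f\in\mathcal{O}_{X}\},
\]
a right $\D_{X}$-module, where $\xi_{f}=\{f,-\}$ is the Hamiltonian vector field of $f$. A direct computation identifies the de Rham complex of $M(X)$ with the Poisson (Brylinski--Koszul) complex of $X$, so that pushforward to a point computes the Poisson homology and, in top degree, returns $\mathcal{O}_{X}/\{\mathcal{O}_{X},\mathcal{O}_{X}\}=\on{HP}_0(R)$ since the image of the Poisson differential $\Omega^{1}_{X}\to\mathcal{O}_{X}$ is exactly $\{\mathcal{O}_{X},\mathcal{O}_{X}\}$. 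For singular (and possibly non-reduced) $X$, one embeds $X$ as a closed subscheme of a smooth affine variety $U$ and defines $M(X)$ to be the right $\D_{U}$-module generated by the delta-distributions supported along $X$, divided out by the vector fields on $U$ which are tangent to $X$ and restrict there to Hamiltonian vector fields; the same statement about the pushforward holds.

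The crucial step — and, I expect, the main obstacle — is to prove that $M(X)$ is holonomic when $X$ has only finitely many symplectic leaves. This is a statement about the characteristic variety $\on{Ch}(M(X))\subset T^{*}U$. The symbols of the ideal $I_{X}$ already confine $\on{Ch}(M(X))$ to $T^{*}U|_{X}$; over a smooth point $x$ of a symplectic leaf $L$, the Hamiltonian vector fields evaluate to a spanning set of $T_{x}L$, so the symbols of the $\xi_{f}$ cut the fiber down to the conormal space $(T_{x}L)^{\perp}\subset T^{*}_{x}U$. Propagating this over the (finite) stratification of $X$ by its symplectic leaves and taking closures gives
\[
\on{Ch}(M(X))\ \subseteq\ \bigcup_{L}\overline{T^{*}_{L}U},
\]
a finite union of closures of conormal bundles. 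Each $T^{*}_{L}U$ is a Lagrangian subvariety of $T^{*}U$ of dimension $\dim U$, so a finite union of them is an isotropic subvariety of $T^{*}U$ of dimension exactly $\dim U$, hence Lagrangian; therefore $M(X)$ is holonomic. Making the fiberwise computation rigorous across the strata in the singular and non-reduced setting — in particular controlling the contributions of the lower-dimensional leaves and checking that the conormals of the leaf \emph{closures} already account for everything — is the technical core of the argument.

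Finally, since $X$ (equivalently $U$) is affine, pushforward to a point sends the holonomic $\D$-module $M(X)$ to a bounded complex of holonomic $\D$-modules on a point, i.e.\ a bounded complex of finite-dimensional vector spaces, because direct image preserves holonomicity (Bernstein, Kashiwara). Hence every Poisson homology group $\on{HP}_{i}(R)$ is finite-dimensional; in degree $0$ this is precisely $\dim R/\{R,R\}<\infty$.
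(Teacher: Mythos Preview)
The paper does not prove this theorem; it merely quotes it from \cite{EtiSch10} and uses it as a black box. Your outline is a faithful and essentially correct sketch of the Etingof--Schedler argument itself: construct the right $\D$-module $M(X)$ generated by the distributions on $X$ modulo Hamiltonian vector fields, identify $\on{HP}_0(R)$ with the zeroth de Rham pushforward of $M(X)$ to a point, bound the characteristic variety of $M(X)$ by the union of conormals to the finitely many symplectic leaves to get holonomicity, and conclude finite-dimensionality from preservation of holonomicity under pushforward. There is nothing to compare on the paper's side beyond the citation.
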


\section{A necessary condition for the quasi-lisse property}

A finitely strongly generated vertex algebra $V$ is called {\em conical}
if it is conformal,
and
$L_0$ gives a
$\frac{1}{m}\Z_{\geq 0}$-grading 
$$V=\bigoplus_{\Delta\in \frac{1}{m}\Z_{\geq 0}}V_{\Delta}$$ on 
$V$ 
for some $m\in \N$,
$\dim V_{\Delta}<\infty$ for all $\Delta$,
and $V_0=\C$,
where
$V_{\Delta}=\{v\in V\mid L_0v=\Delta v\}$.
Note that if  $V$ is a vertex operator algebra,
that is, if $V$ is integer-graded,
then a conical vertex operator algebra is the same as a vertex operator algebra of CFT type.

Let $V$ be a conical vertex algebra.
The $L_0$-grading induces the grading
$$R_V=\bigoplus_{\Delta\in \frac{1}{m}\Z_{\geq 0}}(R_V)_{\Delta},\quad (R_V)_{0}=\C,$$
on $R_V$.
In other words,
the $L_0$-grading induces a contracting $\C^*$-action on
the associated variety $X_V$.

\begin{Rem}
The 
associated variety of a simple conical quasi-lisse vertex algebra is conjecturally irreducible (\cite{AraMor16}). 
The validity 
of this conjecture implies that
the associated variety of a quasi-lisse vertex algebra is
actually {\em symplectic},
that is,
$X_V$ is the closure of a symplectic leaf.
\end{Rem}

\begin{Rem}
Conical lisse ($C_2$-cofinite) conformal vertex algebras are quasi-lisse,
since $X_V$ is a point in this case.
\end{Rem}

\begin{Pro}\label{Th:nilp1}
Let $V$ be a conical quasi-lisse vertex algebra.
Then the image $[\omega]$ of the conformal vector $\omega$ of $V$ is nilpotent in the Zhu's $C_2$-algebra $R_V$
of $V$.
\end{Pro}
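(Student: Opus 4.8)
The plan is to show that $[\omega]$ is a \emph{central} element of the Poisson algebra $R_V$ and that it \emph{vanishes at the cone point}, and then to exploit the symplectic‑leaf structure supplied by the quasi‑lisse hypothesis to spread this vanishing over all of $X_V$. Centrality is the computational heart: expanding $\omega(z)=\sum_n\omega_{(n)}z^{-n-1}=\sum_nL_nz^{-n-2}$ gives $\omega_{(n)}=L_{n-1}$, so $\omega_{(0)}=L_{-1}=T$, the translation operator. Since $Ta=a_{(-2)}\1\in C_2(V)$ for every $a\in V$, the operator $T$ acts as $0$ on $R_V$, whence $\{[\omega],\bar a\}=\overline{\omega_{(0)}a}=\overline{Ta}=0$ for all $a\in V$; as the Poisson bracket is a biderivation and the $\bar a$ generate $R_V$ as a commutative algebra, $[\omega]$ is central in $R_V$. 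Moreover $L_0\omega=2\omega$, so $[\omega]\in(R_V)_2$, and because $V$ is conical with $(R_V)_0=\C$, $[\omega]$ lies in the augmentation ideal $\bigoplus_{\Delta>0}(R_V)_\Delta$, which is the maximal ideal of the unique fixed point $0\in X_V$ of the contracting $\C^*$‑action. Thus $[\omega]$ vanishes at $0$.

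Next I would reduce to leaves. By the cited result of Ginzburg, every irreducible component of $X_V$ is the closure $\overline{\mathscr{L}}$ of a symplectic leaf $\mathscr{L}$, which under quasi‑lisseness is a smooth connected locally closed algebraic Poisson subvariety on which the symplectic form is nondegenerate; so it suffices to prove $[\omega]|_{\overline{\mathscr{L}}}=0$ for each such $\mathscr{L}$. On $\mathscr{L}$ the Hamiltonian vector field of $[\omega]|_{\mathscr{L}}$ is the restriction of $\{[\omega],-\}$, which is zero by centrality; nondegeneracy forces $d\big([\omega]|_{\mathscr{L}}\big)=0$, so $[\omega]$ is constant on the connected set $\mathscr{L}$, hence — being a regular function on $X_V$ constant on a dense subset of the irreducible $\overline{\mathscr{L}}$ — constant on $\overline{\mathscr{L}}$, say equal to $c_{\mathscr{L}}$.

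It then remains to show $c_{\mathscr{L}}=0$, for which I would prove that $\overline{\mathscr{L}}$ contains the cone point. The $\C^*$‑action acts by algebra automorphisms that rescale the Poisson bracket by a character (the bracket lowers the $\frac{1}{m}\Z$‑grading by $1$), hence maps Poisson ideals to Poisson ideals and therefore symplectic leaves to symplectic leaves. Since there are only finitely many leaves, $\C^*\cdot\overline{\mathscr{L}}=\bigcup_{t}t\cdot\overline{\mathscr{L}}$ is a finite union of closed subvarieties, hence closed; being the image of the irreducible set $\C^*\times\overline{\mathscr{L}}$ it is irreducible, so it equals one of the finitely many translates $t\cdot\overline{\mathscr{L}}$, all of the same dimension as and containing $\overline{\mathscr{L}}$, which forces $\C^*\cdot\overline{\mathscr{L}}=\overline{\mathscr{L}}$. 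A nonempty $\C^*$‑stable closed subvariety contains the fixed point, so $0\in\overline{\mathscr{L}}$ and $c_{\mathscr{L}}=[\omega](0)=0$. Therefore $[\omega]$ lies in every minimal prime of $R_V$, i.e.\ $[\omega]$ is nilpotent in $R_V$.

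\textbf{Main obstacle.} The only delicate point is this last step. The Etingof--Schedler finiteness of $R_V/\{R_V,R_V\}$ combined with centrality of $[\omega]$ only gives $[\omega]^{N}\in\{R_V,R_V\}$ for large $N$, which is far from $[\omega]^{N}=0$; converting the information into honest nilpotence genuinely requires the \emph{geometry} of the quasi‑lisse condition — that the finitely many symplectic leaves are algebraic subvarieties and that, because of the conical grading, each of their closures is dragged down to the cone point by the $\C^*$‑flow. One must in particular be careful that the $\C^*$‑action, although it rescales the Poisson tensor rather than preserving it, still preserves the decomposition into symplectic leaves.
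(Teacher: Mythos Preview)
Your proof is correct and follows the same strategy as the paper's: $[\omega]$ is Poisson-central, hence constant on each symplectic leaf, and each irreducible component $\overline{\mathscr{L}}$ is $\C^*$-stable so contains the cone point where $[\omega]$ vanishes. The paper dispatches your ``main obstacle'' in one line, however: since the $\C^*$-action is by algebra automorphisms of $R_V$, it permutes the finitely many irreducible components of $X_V$, and as $\C^*$ is connected this permutation is trivial---so each irreducible component $Y$ is automatically $\C^*$-invariant and hence contains $0$, with no need to analyze how $\C^*$ moves individual leaves.
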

\begin{proof}
Since $V$ is conical,
the $\C^*$-action $\rho$ on $X_V$
induced by the conformal grading
contracts to a point, say $0$,
that is, $\lim\limits_{t\ra 0}\rho(t)x=0$ for all $x\in X_V$.

 Set $z=[\omega]$. It is sufficient  to show that the value of $z$ at any closed point $x$ is zero.
Pick an irreducible component  $Y$ 
of $X_V$ containing $x$.
Note that $Y$ is $\C^*$-invariant, and hence, $0\in Y$.
On the other hand,  there exists a symplectic leaf $\mathscr{L}\subset X_V$ such that
 $Y=\overline{\mathscr{L}}$, the Zariski closure of  $\mathscr{L}$.
 Since it belongs to the Poisson center of $R_V$,
 $z$ belongs to the Poisson center of $\mathcal{O}(\mathscr{L})$.
Hence $z$ is constant on $\mathscr{L}$ as  $\mathscr{L}$ is symplectic,
and thus,
so is 
on $Y$.
Therefore
the value of 
$z$  at $x$ is  the same as the one at
$0$,
which is clearly zero.
\end{proof}

\section{Finiteness of ordinary representations}

Recall that 
a~weak $V$-module $(M,Y_M)$ is called {\it ordinary} 
if $L_0$ acts semi-simply on $M$, any $L_0$-eigenspace  $M_\Delta$ of $M$ of eigenvalue $\Delta\in\mathbb{C}$ is finite-dimensional,
and for any $\Delta\in \mathbb{C}$, we have $M_{\Delta-n}=0$ for all sufficiently large $n\in\mathbb{Z}$.
\begin{Th}
Let $V$ be a quasi-lisse conformal vertex algebra. 
Then the number of the simple ordinary $V$-modules is finite.
\end{Th}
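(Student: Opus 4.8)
The plan is to transport the Etingof--Schedler finiteness of Theorem~\ref{Th:ES} from the commutative Poisson algebra $R_V$ to Zhu's algebra $A(V)$, and then bound the number of simple ordinary $V$-modules by the number of finite-dimensional simple $A(V)$-modules. Recall that Zhu's algebra $A(V)$ \cite{Z} carries a canonical exhaustive increasing filtration $F_0A(V)\subset F_1A(V)\subset\cdots$ by subspaces, compatible with the multiplication, such that the associated graded $\gr^F A(V)$ is a commutative Poisson algebra whose bracket is the symbol of the commutator of $A(V)$ (so the commutator strictly decreases filtration degree); moreover there is a surjective homomorphism of Poisson algebras $R_V\twoheadrightarrow\gr^F A(V)$. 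Recall also Zhu's correspondence: a simple ordinary $V$-module is in particular positive-energy, its top space $M_{\mathrm{top}}$ is a simple $A(V)$-module, the assignment $M\mapsto M_{\mathrm{top}}$ is injective on isomorphism classes, and $M_{\mathrm{top}}$ is finite-dimensional because $M$ is ordinary.

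First I would show that the abelianization $A(V)/[A(V),A(V)]$ --- where $[A(V),A(V)]$ denotes the $\C$-span of all commutators --- is finite-dimensional. Since $V$ is quasi-lisse, $X_V=\Specm(R_V)$ has finitely many symplectic leaves, so Theorem~\ref{Th:ES} gives $\dim R_V/\{R_V,R_V\}<\infty$. The surjective Poisson homomorphism $R_V\twoheadrightarrow\gr^F A(V)$ sends $\{R_V,R_V\}$ onto $\{\gr^F A(V),\gr^F A(V)\}$, hence $\dim\gr^F A(V)/\{\gr^F A(V),\gr^F A(V)\}<\infty$. Now endow $C:=[A(V),A(V)]$ with the induced filtration $F_pC=C\cap F_pA(V)$; then $\gr^F C$ embeds in $\gr^F A(V)$ and contains $\{\gr^F A(V),\gr^F A(V)\}$, since the Poisson bracket is the symbol of the commutator. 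Therefore $\gr^F\bigl(A(V)/C\bigr)=\gr^F A(V)/\gr^F C$ is a quotient of $\gr^F A(V)/\{\gr^F A(V),\gr^F A(V)\}$ and so is finite-dimensional; as the filtration is exhaustive, $\dim A(V)/[A(V),A(V)]<\infty$.

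Next I would deduce that $A(V)$ has only finitely many isomorphism classes of finite-dimensional simple modules. Given pairwise non-isomorphic finite-dimensional simple $A(V)$-modules $U_1,\dots,U_r$, the Jacobson density theorem yields a surjection $A(V)\twoheadrightarrow\prod_{i=1}^{r}\End_{\C}(U_i)$, hence a surjection of abelianizations $A(V)/[A(V),A(V)]\twoheadrightarrow\prod_{i=1}^{r}\End_{\C}(U_i)/[\End_{\C}(U_i),\End_{\C}(U_i)]\cong\C^{\,r}$, using that $\End_{\C}(U_i)$ is a matrix algebra whose abelianization is one-dimensional (via the trace). Thus $r\le\dim A(V)/[A(V),A(V)]<\infty$. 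Combining this with the injectivity of $M\mapsto M_{\mathrm{top}}$ and the finite-dimensionality of $M_{\mathrm{top}}$ for ordinary $M$, the number of simple ordinary $V$-modules is at most $\dim A(V)/[A(V),A(V)]$, which is finite.

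I expect the main obstacle to be the first step: one must set up the Zhu-algebra filtration carefully enough to know both that $\gr^F A(V)$ is a Poisson \emph{quotient} of $R_V$ and that commutator subspaces pass correctly to symbols, so that the purely commutative conclusion of Theorem~\ref{Th:ES} really does control the non-commutative quantity $\dim A(V)/[A(V),A(V)]$. Once that finiteness is in hand, the density argument bounding the finite-dimensional simple $A(V)$-modules and the input from Zhu's theory (that simple ordinary modules inject, via their finite-dimensional top spaces, into simple $A(V)$-modules) are routine.
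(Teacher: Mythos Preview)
Your proposal is correct and follows the same route as the paper: reduce via Zhu's theorem to counting finite-dimensional simple $A(V)$-modules, use the filtration on $A(V)$ together with the Poisson surjection $R_V\twoheadrightarrow\gr A(V)$, and invoke Etingof--Schedler. The only difference is that where the paper cites \cite[Theorem~1.4]{EtiSch10} as a black box for the conclusion that $A(V)$ has finitely many finite-dimensional simples, you unpack that step by proving $\dim A(V)/[A(V),A(V)]<\infty$ and applying Jacobson density---which is essentially how that cited result is established.
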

\begin{proof}
Let $A(V)$ be the Zhu's algebra of $V$.
By the Zhu's theorem \cite{Z},
it is sufficient to show that the number of the simple finite-dimensional $A(V)$-modules is finite.

The algebra $A(V)$ is naturally filtered:
There is a natural filtration of $G_{\bullet}A(V)$ induced by the 
filtration $\bigoplus\limits_{\Delta\leq p}V_\Delta$ of $V$ (\cite{Z})
that makes  the associated graded  $\gr _GA(V) $
a 
Poisson algebra.
Moreover,
there is  a  surjection map
$$R_V\ra \gr A(V)$$
of Poisson algebras \cite{De-Kac06,ALY}.
Therefore,
$\on{Specm}(\gr_G A(V))$  is a Poisson subvariety of $X_V$,
and hence has a finitely many symplectic leaves.
Hence, thanks to 
 Theorem 1.4 of \cite{EtiSch10} that follows from Theorem \ref{Th:ES},
 we conclude that $A(V)$ has only finitely many simple finite-dimensional representations.
\end{proof}

\section{Modular linear differential equations}
Let $\vartheta_k$ denote the Serre derivation of weight $k$
\[
\vartheta_k(f)=q\frac{d}{dq}f-\frac{k}{12}E_2 f,
\]
where $E_n(\tau)$ is the normalized Eisenstein series of weight $n\geq 2$.
Let $\vartheta_k^i=\vartheta_{k+2(i-1)}\circ \cdots \circ \vartheta_{k+2}\circ \vartheta_k$ be the $i$-th iterated Serre derivation of weight $k$ with $\vartheta^0_k=1$.
Recall that a~modular linear differential equation (MLDE) of weight $k$ is a~linear
differential equation $$\vartheta_k^n f+\sum_{j=0}^{n-1}P_{j}\vartheta_k^jf=0$$
with a~classical modular function $P_{j}$ of weight $2n-2j$ for each $0\leq j\leq n-1$.

In this section, we prove the following theorem.
Let $\mathbb{H}$ denote the complex upper-half plane.  

\begin{Th}\label{Th:main} 
Let $V$ be a quasi-lisse vertex operator algebra,
$c\in \C$ the central charge of $V$. 
Then the normalized character 
$$\chi_V(\tau)=\on{tr}_V (e^{2\pi i\tau(L_0-c/24)})\qquad (\tau\in\mathbb{H})$$
satisfies a modular linear differential equation of weight $0$.
\end{Th}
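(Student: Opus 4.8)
The plan is to combine the finiteness statement from Theorem~\ref{Th:ES} (via its Zhu-algebra incarnation, Theorem~1.4 of \cite{EtiSch10}) with Zhu's recursion formulas for $n$-point functions on the torus. First I would recall that, for any vertex operator algebra $V$ and any ordinary $V$-module $M$, Zhu's theory produces, for each $v\in V$, a trace function
\[
S_M(v,\tau)=\on{tr}_M\!\left(o(v)\,q^{L_0-c/24}\right),\qquad q=e^{2\pi i\tau},
\]
where $o(v)$ is the degree-zero mode, and that $\chi_V(\tau)=S_V(\1,\tau)$. The key structural input is Zhu's recursion: $S_M(u_{[-n]}v,\tau)$ for $n\ge 1$ can be written as a $\C$-linear combination of $S_M$ evaluated on vectors of the form $u_{[m]}v$ with $m\ge 0$, with coefficients that are holomorphic Eisenstein-type quasimodular forms (Weierstrass/Eisenstein series $G_{2k}(\tau)$); in particular the span of the functions $\{S_M(v,\tau):v\in V\}$ over the ring of quasimodular forms is already spanned by $\{S_M(v,\tau):v\in C_2\text{-reduced representatives}\}$, i.e. it is governed by $R_V$, not by $V$ itself.

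The second step makes this precise and finite-dimensional. Let $\sfD=q\,d/dq$ be the Ramanujan derivative. Zhu's formulas show that the space of functions spanned by all $S_M(v,\tau)$ as $v$ ranges over $V$ is a module over the ring $\tilde{\mathcal{R}}$ generated by $E_2,E_4,E_6$ and closed under $\sfD$ (equivalently under the Serre derivations $\vartheta_k$). I claim this module is finitely generated over $\tilde{\mathcal{R}}$: indeed, Zhu showed that modulo the action of $\tilde{\mathcal{R}}$ and $\sfD$ one can reduce $v$ to representatives of $R_V=V/C_2(V)$, and further, using the $C_1$- or $C_2$-type reductions together with the Poisson bracket relations, one can reduce modulo the quasimodular forms to representatives of $R_V/\{R_V,R_V\}$ — here I would invoke the ``$C_2$-finiteness of the span of characters'' argument of Zhu/Dong--Li--Mason adapted to the quasi-lisse setting: the crucial point is that commutator (bracket) terms $\overline{u_{(0)}v}$ appear with coefficients that are honest modular forms (no $E_2$), and Theorem~\ref{Th:ES} tells us $\dim R_V/\{R_V,R_V\}<\infty$. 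Hence there is a finite-dimensional $\sfD$-stable (hence $\vartheta_k$-stable) subspace, or at least a finitely generated $\tilde{\mathcal R}$-submodule closed under $\sfD$, containing $\chi_V$.

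Once we have that $\chi_V$ lies in a finitely generated $\tilde{\mathcal R}$-module $\mathcal M$ closed under $\sfD$, the theorem follows by a standard Wronskian argument: the iterated Serre derivatives $\vartheta_0^j\chi_V$ for $j=0,1,2,\dots$ all lie in $\mathcal M$; since $\mathcal M$ is finitely generated over the ring of quasimodular forms, which is Noetherian, the elements $\chi_V,\vartheta_0^1\chi_V,\dots,\vartheta_0^n\chi_V$ become linearly dependent over the fraction field of quasimodular forms for some $n$, giving a relation
\[
\vartheta_0^n\chi_V+\sum_{j=0}^{n-1}g_j\,\vartheta_0^j\chi_V=0
\]
with $g_j$ quasimodular. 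Finally one upgrades the coefficients from merely quasimodular to genuinely modular: because $\vartheta_k$ maps weight-$k$ modular objects to weight-$(k+2)$ ones and the $\vartheta_0^j\chi_V$ transform compatibly under $SL_2(\Z)$ up to the same $E_2$-anomaly, taking the modular-anomaly-free part of the relation (equivalently, choosing the relation of minimal order, whose coefficients are forced to be modular by the $\tau\mapsto-1/\tau$ and $\tau\mapsto\tau+1$ covariance) forces each $g_j$ to be a classical modular form of weight $2n-2j$; this is exactly the argument in \cite{Mas07,KNS}. I expect the main obstacle to be the second step: carefully running Zhu's reduction to show that not just $R_V$ but $R_V/\{R_V,R_V\}$ controls the character space — i.e. verifying that the commutator terms in Zhu's recursion are multiplied by $E_2$-free (truly modular) coefficients so that one legitimately reduces modulo $\{R_V,R_V\}$ and can apply Etingof--Schedler — together with the bookkeeping needed to produce from this a genuinely $\sfD$-stable finitely generated module rather than something only closed up to lower-order corrections.
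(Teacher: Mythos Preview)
Your overall strategy --- use Etingof--Schedler to bound $\dim R_V/\{R_V,R_V\}$, feed this into Zhu's recursion to get a finite-generation statement, and extract a linear differential relation among the $\vartheta_0^j\chi_V$ --- is exactly the paper's. The difference is that you work over the full quasimodular ring $\tilde{\mathcal R}=\C[E_2,E_4,E_6]$ and then try to strip the $E_2$'s at the end, whereas the paper works over $A=\C[\tilde G_4,\tilde G_6]$ from the outset and never sees $E_2$ except inside the Serre derivative.

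Your final ``upgrade'' step is the gap. You argue that by $SL_2(\Z)$-covariance of $\chi_V$ one can force the coefficients $g_j$ to be honestly modular, but no such covariance is available: the theorem you are proving is the only modularity statement in sight for a quasi-lisse $V$, and there is no a priori Zhu/Miyamoto-type theorem guaranteeing that $\chi_V|_\gamma$ is again a linear combination of characters. So the appeal to ``taking the modular-anomaly-free part'' or to \cite{Mas07} is circular. A single function satisfying a monic linear ODE with quasimodular coefficients need not satisfy one with modular coefficients.

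The repair is exactly the refinement you flagged as the expected obstacle, and it goes through cleanly once you separate two facts. First, $S_M(u[0]v,\tau)=0$ identically (Zhu), so one may pass to $V_A/[V_A,V_A]$; at the associated-graded level this is quotienting by $\{R_V,R_V\}$, which is where Theorem~\ref{Th:ES} enters. Second, in Zhu's recursion for $L[-2]v$ the $G_2$-term is precisely what combines with $q\,d/dq$ to produce the Serre derivative $\partial$, while the remaining Eisenstein coefficients lie in $A=\C[\tilde G_4,\tilde G_6]$; likewise the $O_q(V)$-relations only involve $\tilde G_{2k}$ with $k\ge 2$. Hence $V_A/([V_A,V_A]+O_q(V))$ is a finitely generated module over the Noetherian ring $A$, the images of $L[-2]^i\mathbf 1$ eventually satisfy an $A$-linear relation, and translating through $\tilde\chi_M(L[-2]^i\mathbf 1,q)=\partial^i\tilde\chi_M(\mathbf 1,q)+(\text{lower, coefficients in }A)$ gives an MLDE with coefficients in $\C[E_4,E_6]$ directly. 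No upgrade step is needed.
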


Let $(V, Y(\cdot,z),\omega)$ be a~quasi-lisse conformal vertex operator algebra
with the weight grading $V=\bigoplus_{\Delta=0}^\infty V_\Delta$.  Y.~Zhu introduced a~second vertex operator algebra $(V,Y[\cdot,z],\tilde{\omega})$
associated to $V$ (\cite{Z}), where
 the vertex operator $Y[\cdot,z]$ is defined by
linearly extending the assignment 
\[
Y[v,z]=Y(v,e^z-1)e^{z\Delta }=\sum_{n\in\mathbb{Z}}v[n]z^{-n-1}, \quad (v\in V_\Delta,\ \Delta\geq 0),
\]
and 
$\tilde{\omega}=\omega-c/24$. 
We write $Y[\tilde{\omega},z]=\sum_{n\in\mathbb{Z}}L[n]z^{-n-2}$ and
$V_{[\Delta]}=\{v\in V\,|\,L[0]v=\Delta v\}$ for every $\Delta\in \mathbb{Z}_{\geq 0}$.

Set
$A=\mathbb{C}[\tilde{G}_4(q),\tilde{G}_6(q)]$, $V_A=V\otimes A$.
Here, $q$ is a~formal variable and $\tilde{G}_n(q)=\sum_{j=0}^\infty a_{n,j}q^j$,
where $\{a_{n,j}\}_{j\geq 0}$ ($n\geq 2$) are the Fourier coefficients of the Eisenstein series $G_n(\tau)$ of weight $n$, that is,
$G_n(\tau)=\sum_{j=0}^\infty a_{n,j} e^{2\pi i\tau j}$.
Let
 $O_q(V)$ be the
$A$-submodule of $V_A$
generated by
\[
a[-2]b+\sum_{k=2}^\infty (2k-1)\tilde{G}_{2k}(q)a[2k-2]b\qquad (a,b\in V).
\]
Let $[V_A,V_A]$ denote
the $A$-span of elements $a[0]b$, $a,b\in V$, in $V_A$.

\begin{Pro}\label{Pro:key}
The $A$-module
$$V_A/([V_A,V_A]+O_q(V))$$ is finitely generated.
\end{Pro}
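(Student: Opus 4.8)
The plan is to follow Y.~Zhu's argument for the $C_2$-cofinite case, with Theorem~\ref{Th:ES} replacing the role played there by the finiteness of $R_V$; the mechanism is a grading on $V_A$ together with the graded Nakayama lemma. Since $\tilde{G}_4(q)$ and $\tilde{G}_6(q)$ are algebraically independent over $\C$, the ring $A$ is a polynomial ring, which I would grade by placing $\tilde{G}_4(q)$ and $\tilde{G}_6(q)$ in degrees $4$ and $6$; then $A=\bigoplus_{m\geq 0}A_m$ with $A_0=\C$, and because the ring of modular forms is generated by the weight $4$ and weight $6$ Eisenstein series, $\tilde{G}_{2k}(q)\in A_{2k}$ for every $k\geq 2$. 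Using the Zhu grading $V=\bigoplus_{\Delta\geq 0}V_{[\Delta]}$, give $V_A=V\otimes A$ the grading $(V_A)_p=\bigoplus_{m\geq 0}V_{[p-m]}\otimes A_m$, which is non-negative with finite-dimensional homogeneous components. Since $a[n]b\in V_{[\Delta_a+\Delta_b-n-1]}$ for homogeneous $a\in V_{[\Delta_a]}$, $b\in V_{[\Delta_b]}$, the generator $a[-2]b+\sum_{k\geq 2}(2k-1)\tilde{G}_{2k}(q)a[2k-2]b$ of $O_q(V)$ is homogeneous of degree $\Delta_a+\Delta_b+1$ (the factor $\tilde{G}_{2k}(q)$ exactly compensates the drop in $L[0]$-weight), and $a[0]b$ is homogeneous as well; hence $N:=[V_A,V_A]+O_q(V)$ is a graded $A$-submodule of $V_A$. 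By graded Nakayama it then suffices to prove that $V_A/(N+A_+V_A)$ is finite-dimensional, where $A_+=\bigoplus_{m>0}A_m$.

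Next I would compute this quotient by reducing modulo $A_+V_A$. One has $(A_+V_A)_p=\bigoplus_{m>0}V_{[p-m]}\otimes A_m$, so $V_A/A_+V_A\cong V$ via $v\otimes 1\leftrightarrow v$, with the $[\,\cdot\,]$-grading on $V$. Under this identification the image of $O_q(V)$ is $C_2[V]:=\langle a[-2]b\mid a,b\in V\rangle_{\C}$ — the $\tilde{G}_{2k}(q)$-terms lie in $A_+V_A$ — and the image of $[V_A,V_A]$ is $\langle a[0]b\mid a,b\in V\rangle_{\C}$. Recall that $(V,Y[\,\cdot\,,z],\tilde{\omega})$ is again a vertex operator algebra; let $S=V/C_2[V]$ be its Zhu $C_2$-algebra, with Poisson bracket $\{\bar a,\bar b\}=\overline{a[0]b}$. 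Combining the two identifications gives a graded isomorphism
$$V_A/(N+A_+V_A)\;\cong\;S/\{S,S\}.$$

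It remains to show $\dim S/\{S,S\}<\infty$, and this is where Theorem~\ref{Th:ES} is used. Equip $V$ with the conformal-weight filtration $F_pV=\bigoplus_{\Delta\leq p}V_\Delta$ and $S$ with the induced filtration. From the expansion $a[n]b=a_{(n)}b+(\text{terms of strictly lower conformal weight})$ one checks $C_2(V)\subseteq\gr^F C_2[V]$, so that $\bar a\mapsto\bar a$ gives a surjective homomorphism of Poisson algebras $R_V\twoheadrightarrow\gr^F S$. Since $V$ is quasi-lisse, $X_V=\Specm(R_V)$ has finitely many symplectic leaves, and $R_V$ is finitely generated, so $\dim R_V/\{R_V,R_V\}<\infty$ by Theorem~\ref{Th:ES}. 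Because $\{\gr^F S,\gr^F S\}\subseteq\gr^F\{S,S\}$, the space $\gr^F(S/\{S,S\})$ is a quotient of $R_V/\{R_V,R_V\}$, hence finite-dimensional; and $\dim S/\{S,S\}=\dim\gr^F(S/\{S,S\})$, so we are done.

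The step needing care is the grading and filtration bookkeeping rather than any single hard estimate: one must check that the abstract grading on $A$ is well defined (this is the algebraic independence of $\tilde{G}_4(q),\tilde{G}_6(q)$) and that it offsets, summand by summand, the $L[0]$-weight shift in $\tilde{G}_{2k}(q)a[2k-2]b$, so that $O_q(V)$ is genuinely a graded submodule; and one must establish the associated-graded comparison $R_V\twoheadrightarrow\gr^F S$ between the ``circle'' and ``square-bracket'' $C_2$-algebras, which is exactly what transfers the Etingof--Schedler finiteness of $R_V/\{R_V,R_V\}$ to $S/\{S,S\}$. Once these are in place the rest is formal.
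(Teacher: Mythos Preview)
Your proof is correct, and it reaches the same destination via Theorem~\ref{Th:ES}, but the route is genuinely different from the paper's. The paper works with a single increasing filtration $G_pV_A=\bigoplus_{\Delta\leq p}V_\Delta\otimes A$ (by $L_0$-weight, with $A$ sitting entirely in filtration degree~$0$) and simply observes that $\gr_G\bigl([V_A,V_A]+O_q(V)\bigr)$ contains $\{R_A,R_A\}+C_2(V)_A$; this yields a surjection $(R_V/\{R_V,R_V\})\otimes_{\C}A\twoheadrightarrow \gr_G(V_A/U)$, and finite generation follows in one stroke from the finite-dimensionality of $R_V/\{R_V,R_V\}$. Your argument is more structured: you put a genuine grading on $V_A$ (combining $L[0]$-weight with the modular weight on $A$), verify that $N$ is graded, invoke graded Nakayama, and identify the fiber $V_A/(N+A_+V_A)$ with the Poisson trace space $S/\{S,S\}$ of the square-bracket $C_2$-algebra, which you then compare with $R_V$ through a second, conformal-weight filtration. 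The paper's path is shorter because it never passes through the square-bracket theory at all; your path, on the other hand, makes transparent exactly which finite-dimensional space controls the quotient. One simplification you could make: your final filtration step is unnecessary, since $(V,Y[\cdot,z],\tilde\omega)$ is isomorphic as a vertex operator algebra to $(V,Y(\cdot,z),\omega)$, so $S\cong R_V$ as Poisson algebras and $\dim S/\{S,S\}=\dim R_V/\{R_V,R_V\}<\infty$ directly.
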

\begin{proof}
We set
$$U:=[V_A,V_A]+O_q(V).$$
Also,
put
$R=R_V=V/C_2(V)$,  $C_2(V)_A=C_2(V)\otimes_\C A$, and
\begin{align*}
R_A=R\*_\C A=V_A/C_2(V)_A.
\end{align*}

Define an increasing filtration $G_{\bullet}V_A$ of the $A$-module $V_A$ by
\begin{align*}
G_p V_A=\bigoplus_{\Delta\leq p}V_\Delta\* A.
\end{align*}
This induces the filtration on $U$ and $V_A/U$:
 $G_p U=U\cap G_p V_A$,
$\gr_G U=\bigoplus_p G_p U/G_{p-1}U$,
$G_p (V_A/U)=G_p V_A/G_p U$,
and
$$\gr_G (V_A/U)=\bigoplus_p G_p (V_A/U)/G_{p-1} (V_A/U)
= V_A/\gr_G U.$$

Since
$$\gr_G U\supset \{R_A,R_A\}+C_2(V)_A,$$
we have a surjective map of $A$-modules
$$R_A/\{R_A,R_A\}\twoheadrightarrow 
V_A/\gr_G U=\gr_G 
(V_A/U).$$
As 
$R_A/\{R_A,R_A\}=(R/\{R,R\})\otimes_\C A$,
the assertion follows from 
Theorem \ref{Th:ES}.
\end{proof}

Since $A$ is a~Noetherian ring,
it follows from  Proposition \ref{Pro:key} that 
$V_A/([V_A,V_A]+O_q(V))$ is a~Noetherian $A$-module.
Hence, 
 we have the following lemma.
\begin{Lem}\label{Th:kankei1}
For an element  $a$ of $V$,
there exist $s\in\mathbb{Z}_{\geq 0}$ and $g_i(q)\in A$
{\rm(}$0\leq i\leq s-1${\rm)} such that
\[
L[-2]^sa+\sum_{i=0}^{s-1} g_i(q) L[-2]^ia\in [V_A,V_A]+O_q(V).
\]
\end{Lem}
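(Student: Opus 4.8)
The idea is to feed the element $L[-2]a$, and its iterated images $L[-2]^i a$ for $i=0,1,2,\dots$, into the finiteness statement just obtained. Concretely, consider the $A$-submodule of $V_A/U$ (where $U=[V_A,V_A]+O_q(V)$) generated by the images of the descending sequence $a,\ L[-2]a,\ L[-2]^2a,\ \dots$. By Proposition \ref{Pro:key} the $A$-module $V_A/U$ is finitely generated, and since $A=\mathbb{C}[\tilde G_4(q),\tilde G_6(q)]$ is a Noetherian ring, every $A$-submodule of $V_A/U$ is again finitely generated; in particular the submodule generated by $\{L[-2]^i a\}_{i\ge 0}$ is finitely generated. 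Hence there exists $s\in\mathbb{Z}_{\ge 0}$ such that the image of $L[-2]^s a$ lies in the $A$-submodule generated by the images of $a, L[-2]a,\dots, L[-2]^{s-1}a$, i.e.\ there are $g_i(q)\in A$ with
\[
L[-2]^s a + \sum_{i=0}^{s-1} g_i(q)\, L[-2]^i a \in [V_A,V_A]+O_q(V),
\]
which is exactly the assertion.

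The one point that needs a little care is passing from ``$V_A/U$ finitely generated'' to ``the $A$-submodule generated by a particular infinite family is finitely generated.'' This is where Noetherianity of $A$ enters: a finitely generated module over a Noetherian ring is a Noetherian module, so all of its submodules are finitely generated. This is precisely the remark made in the text immediately after Proposition \ref{Pro:key}, that $V_A/([V_A,V_A]+O_q(V))$ is a Noetherian $A$-module; I would just invoke it.

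The main (and really only) obstacle is bookkeeping rather than mathematics: one should make sure the filtration/grading conventions are compatible so that $L[-2]^i a$ genuinely sits in $V_A = V\otimes A$ for every $i$ — i.e.\ that $L[-2]$ raises the $[\,\cdot\,]$-conformal weight by $2$ and maps $V_{[\Delta]}\to V_{[\Delta+2]}$, so the family $\{L[-2]^i a\}$ consists of honest elements of $V$ tensored with $1\in A$. Once that is noted, the argument is a one-line application of the Noetherian property, and there is no subtlety left.
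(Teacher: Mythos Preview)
Your argument is correct and is exactly the approach the paper takes: the paper simply notes that $A$ is Noetherian, so $V_A/([V_A,V_A]+O_q(V))$ is a Noetherian $A$-module, and then states the lemma as an immediate consequence. Your write-up just spells out the standard ascending-chain step (the chain of $A$-submodules generated by $\{L[-2]^i a : 0\le i\le j\}$ stabilizes), which the paper leaves implicit.
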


Let $M$ be an~ordinary $V$-module.
Define the zero-mode action $o(\cdot):V\rightarrow \mathrm{End}(M)$
by linearly extending the assignment
\[
o(a)=\mathrm{Res}_z Y^M(a,z)z^{\Delta-1}dz:M\rightarrow M, \quad (a\in V_\Delta,\ \Delta\geq 0),
\]
For any $a\in V$,  define the formal $1$-point function $\tilde{\chi}_M(a,q)$ by
 $$\tilde{\chi}_M(a,q)=\mathrm{tr}|_M o(a)q^{L_0-c/24}.$$ 
For each $q$-series $f(q)$ and $k\geq 0$,
define the formal Serre derivation $\partial_k$ of weight $k$ by
\[
\partial_k f=q\frac{d}{dq}f(q)+k\,\tilde{G}_2(q)f(q).
\]
Let $\Delta$ and $\ell$ be non-negative integers.
For each $a\in V_{[\Delta]}$ and $f(q)\in  \mathbb{C}[\tilde{G}_2(q),\tilde{G}_4(q),\tilde{G}_6(q)]$ of weight $n$, define the formal iterated Serre derivation $\partial^i$ by
\begin{align*}
\partial^i (f(q) \tilde{\chi}_M(a,q))=\partial_{\Delta+\ell+2i-2}(\partial^{i-1}(f(q) \tilde{\chi}_M(a,q))) \quad (i\geq 1),
\end{align*}
and $\partial^0=\mathrm{id}$.
Here, $f(q)$ is said to be of weight $n$
if $f(q)$ is a~homogeneous element of weight $n$ of the graded algebra $\mathbb{C}[\tilde{G}_2(q),\tilde{G}_4(q),\tilde{G}_6(q)]$, 
where the weight of $\tilde{G}_k(q)$ is $k$ for each $k=2,4,6$.

\begin{Lem}[{cf.~\cite[(5.9)]{DLM00}}]\label{Th:vir1}
Let $v$ be an~element of $V$ and $M$ an~ordinary $V$-module.
We have
\[
\tilde{\chi}_M(L[-2]v,q)=\partial \tilde{\chi}_M(v,q)+\sum_{\ell=2}^\infty \tilde{G}_{2\ell}(q)\tilde{\chi}_M(L[2\ell-2]v,q).
\]
\end{Lem}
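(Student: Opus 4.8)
The plan is to follow the computation of the analogous identity in \cite{DLM00} and \cite{Z}, adapted to the formal ($q$-series) setting used here. The starting point is the general recursion formula for formal trace functions of a vertex operator algebra: for $v\in V$ and a homogeneous $a\in V_{[\Delta]}$, one has an expansion of $\widetilde{\chi}_M(v[-n]a,q)$ (for $n\geq 1$) as a combination of $\widetilde{\chi}_M(v[m]a,q)$ with $m\geq -1$, with coefficients that are certain Eisenstein-type $q$-series $\widetilde{G}_{k}(q)$ (together with, in the case $n=1$, a derivative term coming from $q\,\frac{d}{dq}$). This is precisely the formal analogue of Zhu's recursion; it is obtained by applying the trace to the commutator-type relations among the modes $v[m]$ acting on an ordinary module and using the finiteness $M_{\Delta-n}=0$ for $n\gg 0$ to justify all rearrangements of the (locally finite) sums. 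I would state this recursion first, citing \cite{Z,DLM00}, and then specialize.

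Next I would apply the recursion with $v$ replaced by the Virasoro element $\widetilde{\omega}$ in the square bracket vertex operator structure, i.e.\ to $L[-2]v = \widetilde{\omega}[-2]v$. Since $L[0]\widetilde{\omega}=2\widetilde{\omega}$ (the bracket conformal vector has $L[0]$-weight $2$) and the zero mode $o(\widetilde{\omega})=L_0-c/24$, the $n=2$ specialization of the recursion produces exactly the derivation term $\partial\,\widetilde{\chi}_M(v,q)=\bigl(q\frac{d}{dq}+\Delta'\,\widetilde{G}_2(q)\bigr)\widetilde{\chi}_M(v,q)$ — here the $\widetilde{G}_2(q)$ term arises from the weight bookkeeping in Zhu's recursion, matching the definition of $\partial_k$ above with the appropriate weight — plus the tail $\sum_{\ell\geq 2}\widetilde{G}_{2\ell}(q)\,\widetilde{\chi}_M(L[2\ell-2]v,q)$, where the positive modes $L[2\ell-2]=\widetilde{\omega}[2\ell-1]$ (in the standard indexing $Y[\widetilde\omega,z]=\sum L[n]z^{-n-2}$) contribute. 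The coefficients $(2\ell-1)$-type normalizations and the identification of the Eisenstein series are a matter of matching conventions with \cite[(5.9)]{DLM00}; I would carry this out by direct comparison rather than re-deriving the number theory.

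The main obstacle, and the only place requiring genuine care, is the convergence/finiteness justification: one must check that all the series manipulations are legitimate at the level of formal $q$-series and that the infinite sum $\sum_{\ell\geq 2}\widetilde{G}_{2\ell}(q)\,\widetilde{\chi}_M(L[2\ell-2]v,q)$ is well defined. This follows because, for fixed $v$, $L[2\ell-2]v=0$ for $\ell$ large (the bracket grading is bounded below, so $L[m]v=0$ for $m\gg 0$), so the sum is in fact \emph{finite}; and the ordinary-module hypothesis guarantees each $\widetilde{\chi}_M(\,\cdot\,,q)$ is a well-defined element of $q^{h}\mathbb{C}[[q]]$ for suitable $h$. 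Granting these points, the identity is just the $\widetilde{\omega}[-2]$ instance of Zhu's recursion, exactly as in \cite{DLM00}, and the proof is complete.
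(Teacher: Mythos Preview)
Your approach is essentially the paper's: the paper's proof is a single sentence citing \cite[Proposition~4.3.5]{Z} with $a=\omega$ and $b=v$. There is, however, an indexing slip in your write-up that would derail the computation if followed literally. Since $Y[\tilde\omega,z]=\sum_{n}L[n]z^{-n-2}$, one has $L[n]=\tilde\omega[n+1]$, so $L[-2]v=\tilde\omega[-1]v$, not $\tilde\omega[-2]v$. Thus the relevant specialization of Zhu's recursion is the $a[-1]b$ case (Proposition~4.3.5 of \cite{Z}), not the ``$n=2$'' case you invoke; the $a[-2]b$ recursion (Proposition~4.3.6 of \cite{Z}) is the identity recorded in Lemma~\ref{Th:kankei3} and carries no derivative term. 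With the correct index, the argument runs exactly as you indicate: $o(\tilde\omega)=L_0-c/24$ produces $q\tfrac{d}{dq}\tilde\chi_M(v,q)$, the $k=1$ Eisenstein term $\tilde G_2(q)\tilde\chi_M(L[0]v,q)$ supplies the weight part of $\partial$, and the remaining terms give the tail $\sum_{\ell\geq 2}\tilde G_{2\ell}(q)\tilde\chi_M(L[2\ell-2]v,q)$. (Your later identification $L[2\ell-2]=\tilde\omega[2\ell-1]$ is already consistent with the correct convention.)
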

 
\begin{proof}
The assertion follows by~\cite[Proposition~4.3.5]{Z} with $a=\omega$ and $b=v$.
\end{proof}

Let $a$ be a primiary vector of $V$ of weight $\Delta$,
that is, $L[0]a=\Delta a$ and 
$L[n]a=0$ for $n\geq 1$.

\begin{Lem}[{cf.~\cite[Lemma~6.2]{DLM00}}]\label{Th:vir2}
For each $i\geq 1$, there exist elements $f_j(q)\in A$ {\rm(}$0\leq j\leq i-1${\rm)} such that for any ordinary $V$-module $M$,
\begin{equation}\label{eqn:serre}
\tilde{\chi}_M(L[-2]^ia,q)=\partial^i\tilde{\chi}_M(a,q)+
\sum_{j=0}^{i-1}f_j(\tau)\partial^j\tilde{\chi}_M(a,q).
\end{equation}
\end{Lem}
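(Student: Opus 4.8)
The plan is to prove Lemma~\ref{Th:vir2} by induction on $i$, using Lemma~\ref{Th:vir1} as the base case and the inductive step, together with the observation that $L[-2]^{i-1}a$ is \emph{not} primary but can still be controlled because the ``correction terms'' involve only $L[2\ell-2]L[-2]^{j}a$ with $j<i$, which by repeated application of the Virasoro commutation relations reduce to lower powers $L[-2]^{j'}a$ with $j'\le j$. The point is that each application of $L[2\ell-2]$ (for $\ell\ge 2$) to $L[-2]^{i-1}a$ lowers the power of $L[-2]$ by at least one, after commuting $L[2\ell-2]$ past the $L[-2]$'s and using that $a$ is primary so that $L[n]a=0$ for $n\ge 1$ and $L[0]a=\Delta a$.

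First I would set $i=1$: applying Lemma~\ref{Th:vir1} with $v=a$ gives
\[
\tilde{\chi}_M(L[-2]a,q)=\partial\tilde{\chi}_M(a,q)+\sum_{\ell=2}^\infty \tilde{G}_{2\ell}(q)\tilde{\chi}_M(L[2\ell-2]a,q),
\]
and since $a$ is primary, $L[2\ell-2]a=0$ for all $\ell\ge 2$; here $\partial=\partial^1=\partial_{\Delta}$ because $a\in V_{[\Delta]}$ and (in the notation of the statement) $\ell=0$, $f=1$ of weight $0$. This is exactly \eqref{eqn:serre} for $i=1$ with no correction terms. For the inductive step, assume \eqref{eqn:serre} holds for $i-1$. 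Apply Lemma~\ref{Th:vir1} with $v=L[-2]^{i-1}a$ to write $\tilde{\chi}_M(L[-2]^i a,q)$ as $\partial$ of $\tilde{\chi}_M(L[-2]^{i-1}a,q)$ plus $\sum_{\ell\ge2}\tilde{G}_{2\ell}(q)\tilde{\chi}_M(L[2\ell-2]L[-2]^{i-1}a,q)$. Into the first term substitute the inductive hypothesis; into each summand of the second, commute $L[2\ell-2]$ to the right past the $i-1$ copies of $L[-2]$ using $[L[m],L[n]]=(m-n)L[m+n]+\tfrac{c}{12}\binom{m+1}{3}\delta_{m+n,0}$, and use that $L[m]a=0$ for $m\ge1$, $L[0]a=\Delta a$, to express $L[2\ell-2]L[-2]^{i-1}a$ as a $\C$-linear combination of $L[-2]^{j}a$ with $j\le i-1$ (the weight bookkeeping forces $j\le i-1$, and only finitely many $\ell$ contribute a nonzero result for each fixed $i$). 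Now apply the inductive hypothesis again to each $\tilde{\chi}_M(L[-2]^{j}a,q)$ that appears, collect the resulting $\partial^j\tilde{\chi}_M(a,q)$ terms, and verify that the Serre-derivation weights match up — this is where one must check that $\partial$ applied to $\partial^{i-1}\tilde{\chi}_M(a,q)$ really is $\partial^i\tilde{\chi}_M(a,q)$, i.e.\ that the weight shift is $\Delta+\ell+2(i-1)-2$ with the correct $\ell$, which follows from the definition of $\partial^i$ once one tracks that the accumulated coefficient functions $f_j(q)$ lie in $A=\C[\tilde G_4,\tilde G_6]$ of the appropriate weight $2(i-j)$. Finally, one notes that all coefficients $f_j(q)$ produced are polynomials in $\tilde G_4,\tilde G_6$ (the $\tilde G_2$ contributions get absorbed into the $\partial_k$'s), hence lie in $A$, independently of $M$.

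\textbf{The main obstacle} will be the weight bookkeeping in the inductive step: one has to show cleanly that commuting $L[2\ell-2]$ past $L[-2]^{i-1}$ and using primarity of $a$ yields only terms $L[-2]^{j}a$ with $j\le i-1$ and with coefficients that are scalars (built from $c$, $\Delta$, and combinatorial factors) of the correct weight, so that after inserting the inductive hypothesis the weights of the iterated Serre derivations $\partial^j$ align and the correction coefficients land in $A$ rather than in the larger ring $\C[\tilde G_2,\tilde G_4,\tilde G_6]$. This is exactly the content of the cited \cite[Lemma~6.2]{DLM00}, and the argument there transposes essentially verbatim; the role of quasi-lisseness has already been used, via Theorem~\ref{Th:ES} and Lemma~\ref{Th:kankei1}, to know that the recursion terminates, so it is not needed again here — this lemma is a purely formal consequence of the Zhu recursion for $Y[\cdot,z]$ and the Virasoro relations.
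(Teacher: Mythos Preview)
Your proposal is correct and follows essentially the same approach as the paper: induction on $i$, with the base case coming from Lemma~\ref{Th:vir1} applied to the primary vector $a$, and the inductive step from Lemma~\ref{Th:vir1} applied to $v=L[-2]^{i-1}a$ together with the Virasoro commutation relations. The paper is slightly sharper in that it observes $L[2k-2]L[-2]^{i-1}a=c_k\cdot L[-2]^{i-k}a$ is a \emph{single} multiple (since only even-indexed $L[m]$'s arise when commuting, and weight considerations pin down the exponent), rather than an a~priori linear combination of $L[-2]^ja$'s; but your weaker formulation suffices and is in any case equivalent by the weight count you mention.
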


\begin{proof}
The proof is similar to that of~\cite[Lemma~6.2]{DLM00}.
We prove the assertion by induction on $i$.
When $i=1$, it follows from Lemma~\ref{Th:vir1} that
$\tilde{\chi}_M(L[-2]a,q)=\partial\tilde{\chi}_M(a,q)$, and therefore~\eqref{eqn:serre} follows. 
Suppose that $i\geq 2$.
Then by~ Lemma \ref{Th:vir1}, we see that
\begin{align*}
&\tilde{\chi}_M(L[-2]^i a,q)\\&=\partial \tilde{\chi}_M(L[-2]^{i-1}a,q)+\sum_{k=2}^\infty
\tilde{G}_{2k}(q) \tilde{\chi}_M(L[2k-2]L[-2]^{i-1}a,q).
\end{align*}
By using the relation of the Virasoro algebra, we have
$$L[2k-2]L[-2]^{i-1}a=c_k \cdot L[-2]^{i-k}a$$
with  a~scalar $c_k$ for each
$2\leq k\leq i$ and $L[2k-2]L[-2]^{i-1}a=0$ if $k\geq i+1$.
 Therefore,
 \begin{align*}
\tilde{\chi}_M(L[-2]^i a,q)=\partial \tilde{\chi}_M(L[-2]^{i-1}a,q)+\sum_{k=2}^{i}
c_k \tilde{G}_{2k}(q) \tilde{\chi}_M(L[-2]^{i-k}a,q).
\end{align*}
By the induction hypothesis, we have~\eqref{eqn:serre}, which completes the proof.
\end{proof}

Let $u$ and $v$ be elements of $V$.

\begin{Lem}[{\cite[Proposition 4.3.6]{Z}}]\label{Th:kankei3}
For every ordinary $V$-module $M$,
\[
\tilde{\chi}_M(u[0]v,q)=0,
\]
\[
\tilde{\chi}_M(u[-2]v,q)+\sum_{k=2}^\infty (2k-1)\tilde{G}_{2k}(q)
\tilde{\chi}_M(u[2k-2]v,q)=0.
\]
\end{Lem}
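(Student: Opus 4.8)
The plan is to prove both vanishing identities as genus-one Ward identities, using only the cyclicity of the graded trace, the Borcherds commutator formula, and the conjugation of modes by $q^{L_0}$; these are exactly the ingredients behind \cite[Proposition 4.3.6]{Z}. Throughout, ordinariness of $M$ is what makes every trace a well-defined holomorphic function on $\mathbb{H}$: each $o(a)$ preserves the $L_0$-grading, the $L_0$-eigenspaces are finite-dimensional, and the grading is bounded below, so all partial traces converge and the cyclic property $\mathrm{tr}_M AB\,q^{L_0-c/24}=\mathrm{tr}_M BA\,q^{L_0-c/24}$ is legitimate whenever $A,B$ shift the grading by bounded amounts. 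I will also use the standard identification $o(a)=\mathrm{Res}_z Y^M[a,z]z^{\Delta-1}$ for $a\in V_{[\Delta]}$, so that the square-bracket zero mode coincides with the zero-mode map $o(\cdot)$.

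For the first identity I would argue directly. Setting $m=0$ in the square-bracket commutator formula $[u[m],Y^M[v,z]]=\sum_{j\geq 0}\binom{m}{j}Y^M[u[j]v,z]z^{m-j}$ gives $[u[0],Y^M[v,z]]=Y^M[u[0]v,z]$, and taking $\mathrm{Res}_z z^{\Delta-1}$ (for $v\in V_{[\Delta]}$, noting that $u[0]$ preserves the $L[0]$-grading so $u[0]v\in V_{[\Delta]}$) yields $o(u[0]v)=[u[0],o(v)]$. Since $o(v)$ commutes with $L_0$, cyclicity of the trace gives $\mathrm{tr}_M[u[0],o(v)]q^{L_0-c/24}=\mathrm{tr}_M u[0]\,o(v)\,q^{L_0-c/24}-\mathrm{tr}_M u[0]\,q^{L_0-c/24}o(v)=0$, which is exactly $\tilde{\chi}_M(u[0]v,q)=0$.

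The second identity is where the Eisenstein series must be produced, and here the commutator alone is insufficient: I would form the one-point function $F(z)=\mathrm{tr}_M Y^M[u,z]o(v)q^{L_0-c/24}$ and compare it with the opposite ordering $\mathrm{tr}_M o(v)Y^M[u,z]q^{L_0-c/24}$. Cyclicity together with the conjugation of the square-bracket modes by $q^{L_0}$ turns this comparison into a quasi-periodicity relation for $F$ under $z\mapsto z+2\pi i\tau$ (the periodicity under $z\mapsto z+2\pi i$ being automatic from the integral $L[0]$-grading). Resumming the resulting geometric $q$-series $\sum_{n\geq 1}n^{2k-1}q^n/(1-q^n)$ into the Eisenstein coefficients $\tilde{G}_{2k}(q)$ identifies $F$ with a trace weighted by the Weierstrass function $\wp(z,\tau)=z^{-2}+\sum_{k\geq 2}(2k-1)G_{2k}(\tau)z^{2k-2}$; because $\wp$ is elliptic, the associated torus contour integral vanishes, and reading off the coefficient of the appropriate power of $z$ — equivalently, matching the mode expansion of $[Y^M[u,z],o(v)]$ against the Laurent expansion of $\wp$ — gives precisely $\tilde{\chi}_M(u[-2]v,q)+\sum_{k\geq 2}(2k-1)\tilde{G}_{2k}(q)\tilde{\chi}_M(u[2k-2]v,q)=0$.

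The main obstacle is the analytic and combinatorial bookkeeping in this last step. One must justify that the formal traces converge to holomorphic functions on $\mathbb{H}$, establish the exact quasi-periodicity of $F(z)$ from the $q^{L_0}$-conjugation of the square-bracket vertex operator (which, unlike the original vertex operator, transforms through the change of coordinate $z\mapsto e^z-1$), and carry out the resummation of the doubly-indexed $q$-series into the $\tilde{G}_{2k}(q)$ so that the coefficients $2k-1$ in front of $\tilde{G}_{2k}$ emerge correctly. This matching to the Laurent expansion of $\wp$ is the crux and is exactly the content of Zhu's computation; the first identity, by contrast, is the genuinely elementary trace-of-a-commutator statement carried out above.
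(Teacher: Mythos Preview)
The paper gives no proof of this lemma at all: it is stated with the attribution \cite[Proposition~4.3.6]{Z} and used as a black box. Your proposal is therefore not competing with any argument in the paper but rather sketching the proof behind the citation, and your sketch is faithful to Zhu's method. The first identity via $o(u[0]v)=[u[0],o(v)]$ and cyclicity of the graded trace is correct once one knows that the zero-mode map $o(\cdot)$ agrees in the two vertex-algebra structures $(V,Y)$ and $(V,Y[\cdot])$; this is a standard lemma in Zhu's paper that you invoke without naming. For the second identity your outline---form the one-point function, use $q^{L_0}$-conjugation to get quasi-periodicity in $z\mapsto z+2\pi i\tau$, recognize the Weierstrass $\wp$-expansion, and read off the coefficient---is exactly Zhu's route to Proposition~4.3.6, so there is nothing to contrast.
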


\begin{Th}\label{Th:abs}
Let $V$ be a quasi-lisse vertex operator algebra,
$a\in V$ primary with $L[0]a=\Delta a$.
For each ordinary $V$-module $M$, the series $\tilde{\chi}_M(a,q)$ converges absolutely and 
uniformly in every closed subset of the domain $\{q\,|\,|q|<1\}$, and the limit function $\chi_M(a,q)$
has the form $q^hf(q)$ with some analytic function $f(q)$ in  $\{q\,|\,|q|<1\}$.
Moreover, the space spanned by $\chi_M(a,q)$ for all ordinary $V$-module $M$ is a~subspace of the space of the solutions of a~modular linear differential equation
of weight $\Delta$.
\end{Th}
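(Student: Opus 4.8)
The plan is to combine the finiteness input from Lemma~\ref{Th:vir2} and Lemma~\ref{Th:kankei1} with the vanishing relations of Lemma~\ref{Th:kankei3} to manufacture, for each primary $a$ of $\hat L[0]$-weight $\Delta$, a monic linear differential operator with coefficients in $A=\C[\tilde G_4(q),\tilde G_6(q)]$ annihilating every one-point function $\tilde\chi_M(a,q)$. First I would invoke Lemma~\ref{Th:kankei1} applied to $a$ itself: there is $s\in\Z_{\geq 0}$ and $g_i(q)\in A$ with
\[
L[-2]^sa+\sum_{i=0}^{s-1}g_i(q)L[-2]^ia\in [V_A,V_A]+O_q(V).
\]
By Lemma~\ref{Th:kankei3}, the formal one-point function $\tilde\chi_M(\cdot,q)$ kills every element of $[V_A,V_A]$ (the $u[0]v$ relations) and every element of $O_q(V)$ (the $u[-2]v+\sum(2k-1)\tilde G_{2k}u[2k-2]v$ relations), where one should be slightly careful that $\tilde\chi_M$ is only defined on $V$, not on $V_A$, so ``apply $\tilde\chi_M$'' really means: expand the $A$-coefficients as $q$-series and use $A$-linearity of the trace in $q$, which is legitimate since $\tilde\chi_M(\cdot,q)$ is already valued in $q$-series. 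Hence
\[
\tilde\chi_M(L[-2]^sa,q)+\sum_{i=0}^{s-1}g_i(q)\,\tilde\chi_M(L[-2]^ia,q)=0.
\]

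\textbf{Rewriting via Serre derivatives.} Next I would feed this into Lemma~\ref{Th:vir2}. Since $a$ is primary, that lemma expresses each $\tilde\chi_M(L[-2]^ia,q)$ as $\partial^i\tilde\chi_M(a,q)+\sum_{j=0}^{i-1}f_j(q)\partial^j\tilde\chi_M(a,q)$ with $f_j(q)\in A$, i.e. as a weight-$\Delta$ formal differential operator of order $i$ in the $\partial_k$'s applied to $\tilde\chi_M(a,q)$, with leading term $\partial^i$ and lower-order coefficients in $A$. Substituting into the previous display and collecting terms, $\tilde\chi_M(a,q)$ is annihilated by an operator of the shape
\[
\partial^s+\sum_{i=0}^{s-1}P_i(q)\,\partial^i,\qquad P_i(q)\in A,
\]
where each $P_i$ has the appropriate homogeneous weight $2s-2i$ because the $g_i$ and the $f_j$ are all homogeneous of the correct weights (the weight bookkeeping comes from $\tilde G_{2k}$ carrying weight $2k$ and $\partial_k$ raising weight by $2$). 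Since $A=\C[\tilde G_4(q),\tilde G_6(q)]$ consists exactly of (Fourier expansions of) classical modular forms, and $\partial_k$ is the formal avatar of the Serre derivation $\vartheta_k$, this is precisely a modular linear differential equation of weight $\Delta$ in the sense defined above, once we pass from $q$-series to the functions of $\tau$.

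\textbf{Convergence and the functional form.} The remaining analytic points are (i) absolute and locally uniform convergence of $\tilde\chi_M(a,q)$ on $\{|q|<1\}$, and (ii) that the limit has the form $q^hf(q)$ with $f$ holomorphic there. For (ii): since $M$ is ordinary, $L_0$ acts on $M$ with finite-dimensional eigenspaces and eigenvalues bounded below within each coset of $\Z$; on a simple $M$ the eigenvalues lie in a single coset $h+\Z_{\geq 0}$, so $\tilde\chi_M(a,q)=q^{h-c/24}\sum_{n\geq 0}(\operatorname{tr}|_{M_{h+n}}o(a))q^n$ is manifestly of the stated form, and the general ordinary $M$ is a finite sum of such pieces. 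For (i): the standard argument (as in \cite{DLM00,Z}) is that the coefficient sequence grows at most polynomially would be too optimistic in general, so instead one uses the differential equation itself — having produced an MLDE with holomorphic (indeed modular, hence bounded on vertical strips near $q=0$) coefficients $P_i(q)$ with a regular singular point at $q=0$, the formal power series solution $\tilde\chi_M(a,q)$ is forced to converge in a neighborhood of $0$ by the classical theory of ODEs with regular singularities, and then the equation propagates holomorphy to all of $\{|q|<1\}$ since the $P_i$ are holomorphic there. I expect the main obstacle to be exactly this convergence step: one must be careful that the MLDE produced has $q=0$ as (at worst) a regular singular point and that the recursion for the Frobenius solution has nonzero ``indicial'' denominators, or else supplement with the a priori growth bound on $\dim M_\Delta$ coming from quasi-lisseness; everything else is formal manipulation of the relations in Lemmas~\ref{Th:kankei1}, \ref{Th:vir2}, \ref{Th:kankei3}. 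Finally, applying this with $a=\1$ (which is primary of weight $0$) and noting $\tilde\chi_M(\1,q)=\chi_M(\tau)$ recovers Theorem~\ref{Th:main}.
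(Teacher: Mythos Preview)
Your proposal is correct and follows essentially the same route as the paper: invoke Lemma~\ref{Th:kankei1} to place $L[-2]^sa+\sum g_i(q)L[-2]^ia$ in $[V_A,V_A]+O_q(V)$, apply Lemma~\ref{Th:kankei3} to kill it under $\tilde\chi_M$, then use Lemma~\ref{Th:vir2} to convert to a monic order-$s$ operator in the formal Serre derivations with coefficients in $A$, and finally appeal to the regularity of the resulting ODE at $q=0$ for convergence. Your added care about weight homogeneity of the $P_i$ and about the regular-singular-point argument for convergence makes explicit what the paper compresses into ``since~\eqref{eq:diffproof} is regular'' and ``the remainder of the theorem is clear.''
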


\begin{proof}
The proof is similar to those of~\cite[Theorem 4.4.1]{Z} and~\cite[Lemma 6.3]{DLM00}.
By Lemma~\ref{Th:kankei1}, we have $L[-s]^sa+\sum_{i=0}^{s-1}g_i(q)L[-2]^ia\in O_q(V)$ where $s\in \mathbb{Z}_{\geq 0}$ and
$g_i(q)\in A$  for each $0\leq i\leq s-1$.
It then follows by the definition of $O_q(V)$ and Lemma~\ref{Th:kankei3}
that
$\tilde{\chi}_M(L[-2]^sa+\sum_{i=0}^{s-1} g_i(q)L[-2]^ia,q)=0$.
By Lemma~\ref{Th:vir2}, we obtain a~differential equation 
\begin{equation}\label{eq:diffproof}
\partial^s \tilde{\chi}_M(a,q)+\sum_{i=0}^{s-1}h_i(q)\partial^i\tilde{\chi}_M(a,q)=0.
\end{equation}
for the formal series
$\tilde{\chi}_M(a,q)$ with $h_i(q)\in A$.
Since $h_i(q)$ converges absolutely and uniformly
on every closed subset of  $\{q\,|\,|q|<1\}$, and~\eqref{eq:diffproof} is
regular, it follows that $\tilde{\chi}_M(a,q)$ converges uniformly
on every closed subset of $\{q\,|\,|q|<1\}$.
By using~\eqref{eq:diffproof} again, we see that
 the space spanned by $\chi_M(a,q)$ for all ordinary $V$-module $M$ is a~subset of the space of the solutions of the MLDE
$$
\vartheta^s_{\Delta} \chi_M(a,q)+\sum_{i=0}^{s-1}p_i(q)\vartheta^i_{\Delta}\chi_M(a,q)=0,
$$
where  $p_i(q)\in\mathbb{C}[G_4(\tau),G_6(\tau)]$ is the limit function of $h_i(q)$ with $q=e^{2\pi i\tau}$ and $\tau\in\mathbb{H}$. 
The remainder of the theorem is clear.
\end{proof}

Theorem~\ref{Th:main} follows from Theorem~\ref{Th:abs} with $a=|0\rangle$, the vacuum vector of $V$.

\section{Examples of quasi-lisse vertex algebras}
 Let 
 $V^k(\fing)$ be the universal affine vertex algebra associated with a simple Lie algebra 
 $\fing$ at level $k\in \C$,
 and let
 $V_k(\fing)$ be the unique simple graded quotient of $V^k(\fing)$.
 We have $X_{V^k(\g)}=\g^*$,
 where $\g^*$ is equipped with the Kirillov-Kostant-Souriau Poisson structure,
 and
  $X_{V_k(\fing)}$ is a conic, $G$-invariant, Poisson subvariety of $X_{V^k(\g)}=\fing^*$, 
 where $G$ is the adjoint group of $\fing$ (see \cite{Ara12}).
 
 Let $\mc{N}=\{x\in\fing\mid \ad x \text{ is nilpotent}\}$,
 the nilpotent cone of $\fing$,
 which is identified with the zero locus 
 of the argumentation ideal $\C[\fing^*]^G_+$ of 
 the invariant ring $\C[\fing^*]^G$ via the identification $\fing=\fing^*$.
 It is well-known since Kostant \cite{Kos63}
 that 
 the number of $G$-orbits in  $\mc{N}$
 is finite. 
 \begin{Lem}
The affine vertex algebra $V_k(\fing)$ is quasi-lisse if and only if 
the associated variety
$X_{V_k(\fing)}\subset \mc{N}$.
\end{Lem}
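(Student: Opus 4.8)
The plan is to reduce the statement to Kostant's finiteness theorem for nilpotent orbits by identifying the symplectic leaves of $X:=X_{V_k(\fing)}$ with the adjoint orbits it contains. Recall from the discussion above that $X$ is a conic, $G$-invariant Poisson subvariety of $\fing^*$ --- the Poisson bracket being the restriction of the Kirillov--Kostant--Souriau bracket --- and that $\mc{N}$ is the common zero locus of homogeneous generators $p_1,\dots,p_\ell$ of $\C[\fing^*]^G$, of degrees $d_i\ge 2$; write $\varphi\colon\fing^*\to\C^\ell$, $x\mapsto(p_1(x),\dots,p_\ell(x))$, for the adjoint quotient map, so that $\mc{N}=\varphi^{-1}(0)$. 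The first step is the observation that the symplectic leaves of $X$ are precisely the coadjoint $G$-orbits contained in $X$: on the one hand each leaf of $X$ is contained in a $G$-orbit, because every Hamiltonian vector field on $\fing^*$ is tangent to the closed $G$-stable subvariety $X$, so the leaves of $X$ refine the leaves of $\fing^*$, i.e.\ the coadjoint orbits; on the other hand every $G$-orbit in $X$ is connected and symplectic, hence contained in --- and therefore equal to --- a single leaf (using that $G$ is connected). In particular each symplectic leaf of $X$ is contracted to a point by $\varphi$.

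Granting this first step, I would finish as follows. For the ``if'' direction, assume $X\subseteq\mc{N}$; by Kostant's theorem $\mc{N}$ has only finitely many $G$-orbits, and since $X$ is closed and $G$-stable it is a union of some of them, so $X$ has finitely many symplectic leaves and $V_k(\fing)$ is quasi-lisse. For the ``only if'' direction I argue by contraposition: suppose $X\not\subseteq\mc{N}$, and choose $x\in X$ with $p_i(x)\neq 0$ for some $i$. Since $X$ is conic, $tx\in X$ for all $t\in\C^*$, and $p_i(tx)=t^{d_i}p_i(x)$ takes infinitely many distinct values as $t$ varies (because $d_i\ge 2$ and $p_i(x)\neq 0$); hence $\varphi(X)$ is infinite. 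As $\varphi$ is constant on each symplectic leaf, $X$ must then have infinitely many symplectic leaves, i.e.\ $V_k(\fing)$ is not quasi-lisse.

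I expect the only delicate point to be the first step, namely checking carefully that the symplectic leaves of the Poisson subvariety $X$, in the sense used above, genuinely coincide with the coadjoint orbits it contains rather than with possibly finer pieces of them; this is exactly where one must use that $X$ is a Poisson subvariety of $\fing^*$ and that the adjoint group $G$ is connected. Once that identification is in place, everything else is a formal consequence of the conicity of $X$ together with the classical finiteness of the set of nilpotent orbits, and requires no further computation.
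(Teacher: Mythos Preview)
Your proof is correct and follows essentially the same route as the paper's: both identify the symplectic leaves of $X_{V_k(\fing)}$ with the coadjoint orbits it contains, invoke the finiteness of nilpotent orbits for the ``if'' direction, and use conicity to produce infinitely many distinct orbits for the ``only if'' direction. The only minor difference is that the paper first uses closedness of $X_{V_k(\fing)}$ to find a nonzero \emph{semisimple} element $s$ and then observes that the orbits $G.\lambda s$ for $\lambda\in\C^*$ are pairwise distinct, whereas you bypass this by working directly with the adjoint quotient map $\varphi$ --- your version is slightly cleaner in that it does not actually need the closedness of $X_{V_k(\fing)}$, only its conicity.
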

\begin{proof}
The ^^ ^^ if" part
is clear
since the symplectic leaves in $\fing^*$
 are the coadjoint  orbits of $G$.
 Conversely,
 suppose that $X_{V_k(\fing)}\not \subset\mc{N} $.
 Since  $X_{V_k(\fing)}$ is closed,
there exists a nonzero semisimple element $s$ in $X_{V_k(\fing)}$.
As it  is conic,
$X_{V_k(\fing)}$ contains  infinitely many orbits of the form
$G.\lam s$, $\lam\in \C^*$.
\end{proof}

Recall that
$V_k(\fing)$ is called {\em admissible} if it is an admissible representation (\cite{KacWak89})
as a module over the affine Kac-Moody algebra $\affg$
associated with $\fing$.
All the admissible affine vertex algebras are quasi-lisse,
since 
their associated varieties are  contained in $\mc{N}$
 (\cite{FeiMal97,Ara09b}).
In fact,
 the associated variety of an admissible affine vertex algebra $V_k(\fing)$
is irreducible, that is,
$X_{V_k(\fing)}=\overline{\mathbb{O}}$ for some nilpotent orbit $\mathbb{O}$
of $\fing$ (see \cite{Ara09b} for the explicit description of the orbit $\mathbb{O}$).

Highest weight representations
of an admissible affine vertex algebra $V_k(\fing)$
are exactly the admissible representations $L(\lam)$ of $\affg$ of level $k$
whose integral Weyl groups are
obtained from  that of $V_k(\fing)$
by an element of the extended affine Weyl group
 (\cite{AdaMil95, A12-2}).
Let  $\mf{h}$ be the Cartan subalgebra of $\mf{g}$.
The modular invariance of the normalized {\em full} characters  
$$e^{2\pi kt}\on{tr}_{L(\lam)}(q^{L_0-c/24}e^{2\pi i x}),\quad (\tau,x,t)\in Y,
$$
of those representations,
where $Y$ is some domain in $\mathbb{H}\times \mf{h}\times \C$,
 has been known since 
Kac and Wakimoto 
 \cite{KacWak88,KacWak89},
 and was extended in \cite{AEkeren} to 
 that of the general (full) trace functions.
  Here it is essential to consider their full characters,
 since an admissible representation is not an ordinary representation in general,
 and thus,
the normalized character 
 $\on{tr}_V (e^{2\pi i\tau(L_0-c/24)})$ is not always well-defined.  
 
Theorem \ref{Th:abs}
states the modular invariance of 
the normalized character
(instead of the normalized full character)
of
an admissible representation that is ordinary.
As far as the authors know, this fact is new.

\smallskip

Here are more examples of quasi-lisse affine vertex algebras.
\begin{Th}[\cite{AM15}]\label{Th:AM}
Assume that $\fing$ belongs to the Deligne exceptional  series
$$A_1\subset A_2\subset G_2\subset D_4\subset F_4\subset E_6\subset E_7\subset E_8,$$
and let $k=-h^{\vee}/6-1$.
Then $X_{V_k(\fing)}\cong \overline{\mathbb{O}_{min}}$,
where $\mathbb{O}_{min}$ is the minimal nilpotent orbit of $\fing$.
\end{Th}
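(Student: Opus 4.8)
The plan is to compute the associated variety $X_{V_k(\fing)}$ at the level $k=-h^\vee/6-1$ for each member of the Deligne series and show it equals $\overline{\mathbb{O}_{min}}$. Since $X_{V_k(\fing)}$ is always a conic, $G$-invariant, closed Poisson subvariety of $\fing^*\cong\fing$, the first step is to show it is contained in the nilpotent cone $\mc{N}$; by the Lemma on quasi-lisse affine vertex algebras this is equivalent to $V_k(\fing)$ being quasi-lisse, so it suffices to exhibit enough singular vectors in $V^k(\fing)$ to force the image $[\omega]$ (or rather the invariants $\C[\fing^*]^G_+$) to vanish on $X_{V_k(\fing)}$. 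Concretely, I would look for a singular vector in $V^k(\fing)$ at conformal weight $2$ lying in the component isomorphic to the Cartan square or to a small representation, whose image in $R_{V^k(\fing)}=\C[\fing^*]$ generates, together with the $G$-action, an ideal whose zero locus is exactly $\overline{\mathbb{O}_{min}}$. The special feature of the Deligne series at this level is that $V^k(\fing)$ has a singular vector at a low conformal weight precisely because $-h^\vee/6-1$ is a specially tuned level (this is the content of \cite{AM15}, which I am permitted to cite for the statement).

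The key steps, in order: (i) recall that $R_{V^k(\fing)}=\C[\fing^*]$ as a Poisson algebra and that $R_{V_k(\fing)}$ is a $G$-equivariant Poisson quotient, so $X_{V_k(\fing)}$ is the zero locus of a $G$-stable ideal $I$; (ii) locate a singular vector $v$ of $V^k(\fing)$ of conformal weight $2$ whose symbol $\bar v\in\C[\fing^*]$ is a nonzero $G$-semiinvariant quadratic (the unique — up to the $G$-action — quadratic that cuts out $\overline{\mathbb{O}_{min}}$, cf. the Joseph ideal); this is where the level $-h^\vee/6-1$ enters, since the existence of this singular vector is equivalent to a numerical identity on $k$, $h^\vee$ and the dual Coxeter data of the Deligne series; (iii) deduce that $I$ contains the $G$-module generated by $\bar v$, which is the degree-$2$ part of the Joseph ideal $J_{min}$, hence $X_{V_k(\fing)}\subseteq V(J_{min})=\overline{\mathbb{O}_{min}}$; (iv) show the reverse inclusion $\overline{\mathbb{O}_{min}}\subseteq X_{V_k(\fing)}$, i.e. that $X_{V_k(\fing)}$ is not smaller — for this I would use that $X_{V_k(\fing)}\neq\{0\}$ (the vacuum module is not lisse/integrable at a non-integral level) together with $G$-invariance, conicity and the fact that $\overline{\mathbb{O}_{min}}$ is the unique minimal nonzero nilpotent orbit closure, so any nonzero $G$-invariant conic closed subvariety of $\mc{N}$ contains it.

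The main obstacle is step (iv), the reverse inclusion. The inclusion $X_{V_k(\fing)}\subseteq\overline{\mathbb{O}_{min}}$ follows almost formally once the weight-$2$ singular vector is in hand, but proving $X_{V_k(\fing)}$ is not $\{0\}$ — equivalently that $V_k(\fing)$ is not lisse — and that it is exactly $\overline{\mathbb{O}_{min}}$ rather than a point requires knowing that no further singular vectors collapse the variety. The cleanest route is to invoke the classification of admissible affine vertex algebras for types $A_1,A_2,G_2$ (where $k=-h^\vee/6-1$ is admissible and $X_{V_k(\fing)}=\overline{\mathbb{O}_{min}}$ is already known from \cite{Ara09b}), and for the non-admissible types $D_4,F_4,E_6,E_7,E_8$ to use a direct construction of a nonzero vector of $V_k(\fing)$ whose symbol is a highest-weight vector of the adjoint representation inside $\C[\overline{\mathbb{O}_{min}}]$, or to appeal to the free-field / BRST realizations of these vertex algebras. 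Since the excerpt attributes Theorem~\ref{Th:AM} to \cite{AM15}, I would in practice simply cite that paper for the precise determination of the variety; the role of the argument here is to record that this computation, combined with the Lemma and Theorem~\ref{Th:main}, yields the quasi-lisse property and hence the MLDE for the characters.
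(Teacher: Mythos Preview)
The paper does not prove this theorem; it is stated with attribution to \cite{AM15} and no proof is given in the text. You already recognize this at the end of your proposal, so there is nothing to compare against here.

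Your sketch is broadly in the spirit of the argument in \cite{AM15} (Joseph ideal, weight-$2$ singular vector whose symbol generates the quadratic part of the ideal of $\overline{\mathbb{O}_{min}}$, then a lower bound on the variety). One factual correction: the paper states that $V_{-h^\vee/6-1}(\fing)$ is admissible for types $A_1$, $A_2$, $G_2$, \emph{and} $F_4$, with the non-admissible cases being $D_4$, $E_6$, $E_7$, $E_8$; you placed $F_4$ on the wrong side of this dichotomy.
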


In Theorem \ref{Th:AM},
the affine vertex algebra $V_{-h^{\vee}/6-1}(\fing)$ is admissible
 for types $A_1$,  $A_2$, $G_2$, $ F_4$, 
and so the statement is contained in \cite{Ara09b}. 
However, $V_{-h^{\vee}/6-1}(\fing)$  is not admissible for types $D_4$, $E_6$, $E_7$, $E_8$.
These non-admissible quasi-lisse affine vertex algebras have appeared in \cite{BeeLemLie15}
as main examples of chiral algebras coming from 4d SCFTs. 
In fact, the labels $D_4$, $E_6$, $E_7$, $E_8$ also appear in Kodaira's classification of 
isotrivial elliptic fibrations, 
and the corresponding 4d SCFTs are obtained by applying the $F$-theory to these isotrivial elliptic fibrations.
By construction \cite{BeeLemLie15}, 
 the character of the above non-admissible quasi-lisse affine vertex algebras
are the (homogeneous) Schur indices of these 4d SCFTs obtained from elliptic fibrations.
In mathematics,
such a non-admissible affine vertex algebra was first extensively studied in \cite{Per13}.

In the next section
we derive the explicit form of the characters of these non-admissible quasi-lisse affine vertex algebras.

\smallskip

Now let us give examples of quasi-lisse vertex algebras
outside affine vertex algebras.
Let $\W^k(\fing,f)$ be the $W$-algebra associated with $\fing$ and a nilpotent element $f\in \fing$ at level $k$,
defined by the quantized Drinfeld-Sokolov reduction
\begin{align*}
\W^k(\fing,f)=H^0_{DS,f}(V^k(\fing)),
\end{align*}
where $H^\bullet_{DS,f}(M)$ denotes the cohomology of the BRST complex with coefficient $M$ associated with 
the Drinfeld-Sokolov reduction with respect to $f$.
This definition  was discovered 
by Feigin and Frenkel \cite{FF90} in the case that $f$ is principal 
as a generalization of Kostant's Whittaker model of the center of $U(\g)$
(\cite{Kos78}),
and was generalized to an arbitrary $f$  by Kac and Wakimoto 
 (\cite{KacRoaWak03}).

By \cite{Ara09b},
the natural surjection $V^k(\fing)\ra V_k(\fing)$
induces a surjective homomorphism
$\W^k(\fing,f)\twoheadrightarrow H^0_{DS,f}(V_k(\fing))$ of vertex algebras,
and moreover,
\begin{align*}
X_{H^0_{DS,f}(V_k(\fing))}\cong X_{V_k(\fing)}\cap \mc{S}_f,
\end{align*}
where $\mc{S}_f$ is the Slodowy slice at $f$,
that is,
$\mc{S}_f=f+\fing^e$.
Here $\{e,f,h\}$ is an $\mf{sl}_2$-triple and
$\fing^e$ is the centralizer of $e$ in $\fing$.
Therefore, we have the following assertion.
\begin{Lem}\label{Lem:W}
Let $k$ be non-critical
and suppose that $V_k(\fing)$ is quasi-lisse,
that is, $X_{V_k(\fing)}\subset \mc{N}$.
For any $f\in X_{V_k(\fing)}$,
$H^0_{DS,f}(V_k(\fing))$ is quasi-lisse,
and hence, so is the simple quotient $\W_k(\fing,f)$ of $\W^k(\fing,f)$.
\end{Lem}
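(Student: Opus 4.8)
The plan is to deduce the lemma essentially by unwinding the two cited inputs from \cite{Ara09b} and combining them with the general machinery of Section~2. First I would recall that, since $k$ is non-critical, the Drinfeld--Sokolov reduction functor $H^0_{DS,f}$ is exact on the category $\BGG$ and, by \cite{Ara09b}, the natural surjection $V^k(\fing)\twoheadrightarrow V_k(\fing)$ induces a surjection of vertex algebras $\W^k(\fing,f)\twoheadrightarrow H^0_{DS,f}(V_k(\fing))$ together with the identification of associated varieties
\[
X_{H^0_{DS,f}(V_k(\fing))}\cong X_{V_k(\fing)}\cap \mc{S}_f,
\]
where $\mc{S}_f=f+\fing^e$ is the Slodowy slice at $f$. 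The point $f$ is assumed to lie in $X_{V_k(\fing)}$, so this intersection is nonempty; and since $V_k(\fing)$ is quasi-lisse, $X_{V_k(\fing)}\subset\mc{N}$.

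Next I would argue that $X_{V_k(\fing)}\cap\mc{S}_f$ has finitely many symplectic leaves. The Slodowy slice $\mc{S}_f$ is a transverse slice to the adjoint orbit $G.f$ inside $\fing\cong\fing^*$, and the intersection of $\mc{S}_f$ with any $G$-invariant closed subvariety of $\mc{N}$ is a union of the transverse slices $\mc{S}_f\cap\overline{\mathbb{O}}$ over the (finitely many, by Kostant) nilpotent orbits $\mathbb{O}$ meeting it. Each such $\mc{S}_f\cap\mathbb{O}$ is smooth and carries a symplectic structure (this is the standard Slodowy/Gan--Ginzburg picture), so $X_{V_k(\fing)}\cap\mc{S}_f$ is stratified by finitely many symplectic pieces; equivalently, since $X_{V_k(\fing)}$ itself has finitely many symplectic leaves (it is a finite union of nilpotent orbits) and transverse intersection with $\mc{S}_f$ preserves the Poisson structure, the Poisson variety $X_{V_k(\fing)}\cap\mc{S}_f$ has only finitely many symplectic leaves. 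Hence $H^0_{DS,f}(V_k(\fing))$ is a finitely strongly generated conformal vertex algebra whose associated variety has finitely many symplectic leaves, i.e.\ it is quasi-lisse by definition.

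Finally, for the simple quotient $\W_k(\fing,f)$, I would note that $\W_k(\fing,f)$ is a quotient vertex algebra of $H^0_{DS,f}(V_k(\fing))$ (it is by definition the unique simple graded quotient, and $H^0_{DS,f}(V_k(\fing))$ surjects onto it). A surjection of vertex algebras $V\twoheadrightarrow \bar V$ induces a surjection $R_V\twoheadrightarrow R_{\bar V}$ of Poisson algebras, hence a closed Poisson embedding $X_{\bar V}\hookrightarrow X_V$; since a closed Poisson subvariety of a variety with finitely many symplectic leaves again has finitely many symplectic leaves (each leaf of $X_{\bar V}$ is contained in a leaf of $X_V$, and distinct leaves of $X_{\bar V}$ lie in distinct leaves of $X_V$ after passing to connected components), $\W_k(\fing,f)$ is quasi-lisse as well. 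The one step that requires genuine care, rather than bookkeeping, is the claim that the scheme-theoretic intersection $X_{V_k(\fing)}\cap\mc{S}_f$ has finitely many symplectic leaves as a Poisson variety; this is where one must invoke the transversality of the Slodowy slice together with the finiteness of nilpotent orbits, and where a careless argument could miss non-reducedness issues — though for the conclusion we only need the reduced structure, on which the leaf-finiteness is immediate from Kostant's theorem.
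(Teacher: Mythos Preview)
Your argument is correct and follows the same route the paper implicitly takes: the paper states the associated-variety formula $X_{H^0_{DS,f}(V_k(\fing))}\cong X_{V_k(\fing)}\cap \mc{S}_f$ from \cite{Ara09b} and then simply writes ``Therefore, we have the following assertion,'' treating the lemma as an immediate corollary of that formula together with Kostant's finiteness of nilpotent orbits. Your write-up supplies the details the paper leaves tacit; the only minor point is that your parenthetical justification for why a closed Poisson subvariety inherits leaf-finiteness is slightly imprecise---the cleaner statement is that a closed Poisson subvariety is a union of symplectic leaves of the ambient variety (Hamiltonian vector fields are tangent to it), so it automatically has no more leaves than the ambient space.
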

We note that 
$H^0_{DS,f}(V_k(\fing))\cong \W_k(\fing,f)$
if $G.f\subset X_{V_k(\fing)}$ 
for type $A$ (\cite{Ara08-a})
and this conjecturally holds for any $\fing$
(\cite{KacWak08}).

Lemma \ref{Lem:W}
implies
admissible affine vertex algebras produce many quasi-lisse $W$-algebras
by applying the Drinfeld-Sokolov reduction.
For instance,
if $k$ is a non-degenerate admissible number (see \cite{A2012Dec}),
then $X_{V_k(\fing)}=\mc{N}$, and hence,
\begin{align*}
X_{H^0_{DS,f}(V_k(\fing))}\cong \mc{N}\cap \mc{S}_f,
\end{align*}
which is  irreducible and therefore symplectic (\cite{Pre02}).
In particular, if $f$ is a subregular nilpotent element in types $ADE$,
$ X_{H^0_{DS,f}(V_k(\fing))}$ has the simple singularity of the same type as $\fing$ (\cite{Slo80}).
In type $A$,
it has been recently shown by Genra \cite{Genra} that
the subregular $W$-algebra $\W^k(\mf{sl}_n,f_{subreg})$
is isomorphic to   Feigin-Semikhatov's $W_n^{(2)}$-algebra (\cite{FeiSem04}) at level $k$ .

\smallskip

See \cite{AM15,AraMor16b} for more examples of quasi-lisse vertex algebras,
and see e.g. \cite{Dan} for more examples of vertex algebras obtained from 4d SCFTs.

\section{The characters of affine vertex algebras associated with the Deligne exceptional series}\label{section:DLE}

In this section, we give the explicit character formulas of the quasi-lisse affine vertex algebras associated with the Deligne exceptional series 
 appeared in Theorem \ref{Th:AM}
by using MLDEs.

Let $\mathfrak{g}$ be a~Lie algebra in the Deligne exceptional series
and $V$ the simple affine vertex algebra $V_k(\mathfrak{g})$ with $k=-h^\vee/6-1$.
The Deligne dimension formula~\eqref{eqn:deligne1} below 
implies that the central charge $c$ of $V$ is
given by $$c=-2h^\vee-2.$$

\begin{Lem}
 The square of the Virasoro element $\omega$ of $V$ is $0$ in the Zhu's $C_2$-algebra $R_V$.
\end{Lem}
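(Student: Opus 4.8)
The plan is to combine Proposition~\ref{Th:nilp1} with a dimension count in low conformal weights. Since $V=V_k(\fing)$ with $k=-h^\vee/6-1$ is conical and quasi-lisse (by Theorem~\ref{Th:AM}), Proposition~\ref{Th:nilp1} already tells us that $[\omega]$ is nilpotent in $R_V$; the point is to pin down that the nilpotency index is exactly $2$, i.e.\ that $[\omega]^2=0$. First I would recall that $R_V$ inherits the conformal $\Z_{\geq 0}$-grading, with $[\omega]\in (R_V)_2$, so that $[\omega]^2\in (R_V)_4$. Because $X_{V_k(\fing)}\cong\overline{\mathbb O_{min}}$ is the closure of the minimal nilpotent orbit, $R_V$ modulo its nilradical is $\C[\overline{\mathbb O_{min}}]$, and the leading term argument of Proposition~\ref{Th:nilp1} shows $[\omega]$ maps to $0$ there; but to get $[\omega]^2=0$ on the nose I need to control the actual (possibly non-reduced) algebra $R_V$ in weights $\leq 4$.

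The key step is therefore to compute, or at least bound, $\dim (R_V)_4$ and to identify generators. For $V^k(\fing)$ one has $R_{V^k(\fing)}=\C[\fing^*]=\mathrm{Sym}(\fing)$, and the conformal vector is $\omega=\tfrac{1}{2(k+h^\vee)}\sum_i x_i x^i$ (the Casimir), whose image in $\mathrm{Sym}^2(\fing)$ is the quadratic Casimir element. For the simple quotient $V_k(\fing)$, $R_V$ is the quotient of $\mathrm{Sym}(\fing)$ by the image of the maximal submodule of $V^k(\fing)$; since $V_k(\fing)$ at this level is known (via \cite{AM15}, or directly via the singular vector structure for these special levels) to have a singular vector in low conformal weight, $R_V$ is a proper quotient. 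Concretely, $(R_V)_2 = \mathrm{Sym}^2(\fing)/(\text{image of level-2 relations})$: as a $\fing$-module $\mathrm{Sym}^2(\fing)$ decomposes into the Cartan square, the trivial (Casimir) summand, and the adjoint summand, and I would use the known classical decomposition to see which pieces survive and in particular to see that the Casimir generator $[\omega]$ survives but becomes nilpotent. Then $(R_V)_4$ contains $[\omega]^2$ together with products of the other degree-$\leq 4$ generators; the claim $[\omega]^2=0$ amounts to showing that the specific element $(\sum x_i x^i)^2$ lies in the relation ideal, which at level $-h^\vee/6-1$ should follow from the descendant of the singular vector (applying $(-1)$-st and $(0)$-th products to land in $C_2(V)$) together with the Deligne dimension formula~\eqref{eqn:deligne1} fixing the relevant multiplicities uniformly across the series.

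The main obstacle I expect is the uniform treatment across the whole Deligne series, especially the non-admissible cases $D_4,E_6,E_7,E_8$, where one cannot simply quote the admissible-level results of \cite{Ara09b}: I will need the explicit structure of the maximal submodule of $V^{-h^\vee/6-1}(\fing)$, or equivalently a presentation of $\C[\overline{\mathbb O_{min}}]$ together with the nilpotent "fuzz" in $R_V$. A clean way around this is to argue purely in $R_V$: show (i) $[\omega]$ is a scalar multiple of the Casimir class, (ii) $[\omega]$ lies in the nilradical $N$ by Proposition~\ref{Th:nilp1}, and (iii) $N^2=0$ in $R_V$ in the relevant weight, by checking that $(R_V)_{\leq 4}$ has no room for a nonzero square of a weight-$2$ nilpotent once the Deligne decomposition numbers are inserted; (iii) is the step that needs the honest computation. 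If a fully uniform argument proves awkward, the fallback is to verify $[\omega]^2=0$ case-by-case using the known generators-and-relations presentations of $R_{V_k(\fing)}$ for each member of the series, the admissible cases $A_1,A_2,G_2,F_4$ being immediate from \cite{Ara09b} and the four non-admissible cases from the minimal-orbit description in \cite{AM15}.
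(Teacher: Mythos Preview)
Your main line of attack---invoking Proposition~\ref{Th:nilp1} to get $[\omega]$ nilpotent and then bounding $(R_V)_4$ via a Deligne-style dimension count to force nilpotency index $2$---is not the route the paper takes, and step~(iii) is a genuine gap rather than a routine computation. To carry out the dimension count you would need to know $\dim (R_V)_4$, i.e.\ you would need control of the relation ideal $I\subset S(\fing)$ in degree $\leq 4$; but that control is essentially equivalent to already knowing which pieces of $S^4(\fing)$ lie in $I$, which is the content of the lemma. The Deligne dimension formulas describe $\fing$-module decompositions of tensor/symmetric powers, not which summands land in $I$, so they do not by themselves close the argument. Proposition~\ref{Th:nilp1} is therefore not actually used in the paper's proof of this lemma.

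Your fallback is closer to what the paper does, but the attribution is off in a way that matters. The paper does \emph{not} split into ``admissible via \cite{Ara09b}'' versus ``non-admissible via \cite{AM15}'': \cite{Ara09b} only gives the reduced variety $X_{V_k(\fing)}=\overline{\mathbb{O}_{min}}$, which yields $[\omega]$ nilpotent but not $[\omega]^2=0$, since $R_V$ can be non-reduced. Instead the paper treats all non-type-$A$ cases ($G_2,D_4,F_4,E_6,E_7,E_8$) uniformly by citing \cite{AM15}, where it is shown directly that $\Omega^2\in I$ for these Lie algebras at level $-h^\vee/6-1$. The type $A$ cases are handled separately: for $\mathfrak{sl}_2$, the image of the singular vector in $I$ equals $\Omega e$ up to scalar by \cite{FeiMal97}, whence $\Omega^2\in I$; for $\mathfrak{sl}_3$, the degree-$2$ singular vector generates a $\fing$-submodule $W\subset S^2(\fing)$, and one combines \cite{GanSav04} (showing $\fing\cdot W\subset S^3(\fing)$ contains a copy of $\fing$) with Kostant's separation theorem (the unique $\fing$-isotypic copy in $S^3(\fing)$ is $\fing\cdot\Omega$) to conclude $\fing\cdot\Omega\subset \fing\cdot W\subset I$, hence $\Omega^2\in I$.
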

\begin{proof}
Let $I$ be the ideal of $R_{V^k(\fing)}=S(\mf{g})$
generated by the image of the maximal submodule of $V^k(\fing)$,
so that 
$R_{V_k(\fing)}=S(\mf{g})/I$.
We need to show that $\Omega^2\in I$,
where $\Omega$ is the Casimir element   of $S(\mf{g})$.

If $\fing$ is not of  type $A$, 
then this result has been already stated in  Lemma 2.1 of \cite{AM15},
see (the proof of) Theorem 3.1 of  \cite{AM15}.
So let $\fing$ be of type $A$,
in which case
the maximal submodule of $V^k(\fing)$
is generated by a singular vector, say $v$ (\cite{KacWak88}).
For 
$\fing=\mf{sl}_2$,
 the assertion follows immediately from a result in \cite{FeiMal97},
 which says that the  image of $v$ in $I$
coincides with $\Omega e$ up to nonzero constant multiplication,
see  the proof of Theorem 4.2.1 of \cite{FeiMal97}.
Finally let $\fing=\mf{sl}_3$.
Then 
the vector $v$ has degree $2$,
cf. ~ \cite{Per08}.
Let $V$ be the $\fing$-submodule of $S^2(\fing)$ generated by the image $[v]$ of $v$ in $I$.
Proposition 3.3 of  \cite{GanSav04} (which is valid for type $A$ cases as well)
says that $\fing \cdot V\subset S^3(\fing)$ contains a submodule isomorphic to $\fing$.
On the other hand,
Kostant's Separation Theorem  (\cite{Kos63}, cf. Proposition 3.2 of \cite{GanSav04}) implies that 
$\fing \cdot \Omega$ is the unique submodule of $ S^3(\fing)$ isomorphic to $\fing$.
Thus,
$\fing \cdot \Omega\subset  \fing  \cdot V$,
and the assertion follows.
\end{proof}
As $(V,Y[\cdot,z])$ is isomorphic to $(V,Y(\cdot,z))$,
it follows that
$$L[-2]^2|0\rangle =
\sum_{i=1}^\ell b_i[-2]c_i $$
for some
$\ell\geq 0$,
where $b_i$ and $c_i$, $1\leq i\leq \ell$, are $L[0]$-homogeneous elements of $V$ 
such that $L[0](b_i[-2]c_i)=4b_i[-2]c_i$.
On the other hand,
it  follows from the definition of $O_q(V)$ that 
$$b_i[-2]c_i\equiv -\sum_{k=2}^\infty f_k(q) b_i[2k-2]c_i\pmod{O_q(V)}$$
for  $1\leq i\leq \ell$.
However,
 $b_i[2]c_i\in\mathbb{C}|0\rangle$ and  $b_i[2k-2]c_i=0$ for $k\geq 3$
 as the $L[0]$-weight of $b_i[-2]c_i$ is $4$.
Therefore,
we get that $$L[-2]^2|0\rangle+g(q)|0\rangle \in O_q(V)$$ with 
$g(q)\in A$.
By using Lemma~\ref{Th:vir2},
we see that the formal characters $\tilde{\chi}_M(|0\rangle,q)$ of all ordinary $V$-modules $M$ satisfy 
a~second order differential equation of the form 
$\partial^2\tilde{\chi}_M(|0\rangle ,q)+f_1(q)\partial \tilde{\chi}_M(|0\rangle ,q)+
f_2(q)\tilde{\chi}_M(|0\rangle ,q)=0$ with $f_1(q),f_2(q)\in A$.
Hence, the characters $\chi_M(\tau)$ of the ordinary $V$-modules $M$
satisfy a~second order MLDE
$L(f)=0$ of weight 0.

The second order MLDEs (of weight 0) have the form
\begin{equation}\label{eqn:kz}
f''(\tau)-\frac{1}{6}E_2(\tau)f'(\tau)-\frac{k(k+2)}{144}E_4(\tau)f(\tau)=0
\end{equation}
with $k\in \C$.
Here, 
\[
'=q\frac{d}{dq}=\frac{1}{2\pi i}\frac{d}{d\tau}.
\]
A~function $f(\tau)$ is called of {\it vacuum type} if $f$ has the form
$f(\tau)=q^{-\alpha/24}(1+\sum_{n=1}^\infty a_nq^n)$ with $\alpha\in \Q$ and $a_n\in \mathbb{Z}_{\geq 0}$ for each $n\geq 1$, where $q=e^{2\pi i \tau}$.
Let $f$ be a~solution of~\eqref{eqn:kz} of the form
$f(\tau)=q^{-\alpha/24}(1+O(q))$ with $\alpha\in \Q$.
Then by substituting $f$ into~\eqref{eqn:kz}, we see that
$\alpha=-k/12$ or $(k+2)/12$.
If $\alpha=-k/12$, it follows that $k$ is one of the following numbers~\cite[(3.12)]{KNS}:
\begin{equation}\label{cand1}
k=\frac{1}{5},\frac{1}{2},1,\frac{7}{5},2,\frac{13}{4},3,\frac{7}{2},\frac{19}{5},4.
\end{equation}
On the other hand, if $\alpha=(k+2)/12$, we have~\cite[(3.16)]{KNS}
\begin{equation}\label{cand2}
k=\frac{1}{5},\frac{1}{2},1,2,3,5,8,11,17,23,29,53.
\end{equation}

Since $V$ is of CFT-type with the central charge $-h^\vee-2$, the character $\chi_V(\tau)$ is of vacuum type and has the form $\chi_V(\tau)=q^{(h^\vee+1)/12}(1+O(q))$.
Therefore,~\eqref{cand1} and~\eqref{cand2} 
imply that the MLDE $L(f)=0$ must be
the following one:
\begin{equation}\label{eqn:diff1}
f''(\tau)-\frac{1}{6}E_2(\tau)f'(\tau)-\frac{(h^\vee-1)(h^\vee+1)}{144}E_4(\tau)f(\tau)=0.
\end{equation}

The vacuum type solutions of~\eqref{eqn:diff1} are also given in~\cite{KK} and~\cite{KNS}.
As a result, we conclude that 
\begin{align*}
&\chi_{V_{-4/3}(A_1)}=\frac{\eta(3\tau)^3}{\eta(\tau)^3},\quad
\chi_{V_{-3/2}(A_2)}=\frac{\eta(2\tau)^8}{\eta(\tau)^8},\quad
\chi_{V_{-5/3}(G_2)}=\frac{E_1^{(3)}(\tau)\eta(3\tau)^6}{\eta(\tau)^8},\\
&\chi_{V_{-2}(D_4)}=\frac{E_4'(\tau)}{240\eta(\tau)^{10}},\quad
\chi_{V_{-5/2}(F_4)}=\frac{E_2^{(2)}(\tau)\eta(2\tau)^{24}}{\eta(\tau)^{28}},\\
&\chi_{V_{-3}(E_6)}=
-\frac{1}{462}\left(\frac{E_6(\tau)E_4'(\tau)}{240\eta(\tau)^{22}}
-\eta(\tau)^2\right),\\
&\chi_{V_{-4}(E_7)}=\frac{1}{204204}\Biggl(
\Delta(\tau)P_2\left(\frac{E_6(\tau)}{\sqrt{\Delta(\tau)}}\right)
\frac{E_4'(\tau)}{240\eta(\tau)^{34}}-\Delta(\tau)\frac{E_6(\tau)}{\eta(\tau)^{34}}
\Biggr),\\
&\chi_{V_{-6}(E_8)}=\frac{1}{38818159380}\Biggl(
\Delta(\tau)^2P_4\left(\frac{E_6(\tau)}{\sqrt{\Delta(\tau)}}\right)
\frac{E_4'(\tau)}{240\eta(\tau)^{58}}-\Delta(\tau)^{5/2}
Q_4\left(\frac{E_6(\tau)}{\sqrt{\Delta(\tau)}}\right)\frac{1}{\eta(\tau)^{58}}
\Biggr).
\end{align*}
Here,
$\eta(\tau)=q^{1/24}\prod_{n=1}^\infty (1-q^n)$,
$\Delta(\tau)=\eta(\tau)^{24}$,
$E_2^{(2)}(\tau)=2E_2(2\tau)-E_2(\tau)$,
\[
E_1^{(3)}(\tau)=1+6\sum_{n=1}^\infty \Bigl(\sum_{d|n}\left(\dfrac{d}{3}\right)(n/d)^2\Bigr)q^n
\]
with the Legendre symbol $\left(\dfrac{\cdot}{\cdot}\right)$,
$P_2(x)=x^2+462$, $P_4(x)=x^4+1341x^2+201894$, and $Q_4(x)=x^3+879x$.

In particular, it follows 
that the characters
of $V_{-4/3}(A_1)$,
$V_{-3/2}(A_2)$,
$V_{-5/3}(G_2)$, and 
$V_{-5/2}(F_4)$  are modular forms,
while those of
 $V_{-2}(D_4)$, $V_{-3}(E_6)$, $V_{-4}(E_7)$ and $V_{-6}(E_8)$ are quasimodular forms of positive depths
(\cite[pp.450]{KNS}).
Moreover, if $h^\vee$ is the dual Coxeter number of $D_4$, $E_6$, $E_7$ or $E_8$, then MLDE~\eqref{eqn:diff1} 
has a~solution with a~logarithmic term (see \cite[section~5]{KK} and~\cite[Remark~3.8]{KNS}).
Note that the  above formula for $A_1$ and $A_2$
follows also from  the recent remark \cite{KW4} by Kac and Wakimoto.

\begin{Rem}\label{Th:remk1}
It should be notable that
the coefficient of $E_4(\tau)f(\tau)$ in~\eqref{eqn:diff1} is a~non-constant rational function in $h^\vee$,
as
such phenomena are 
 often observed for the Deligne exceptional series.
In fact,
the dimensions of specific modules over any Lie algebra in the Deligne exceptional series satisfy
the so-called {\it Deligne dimension formulas}, which are rational functions in $h^\vee$. For example, 
\begin{equation}\label{eqn:deligne1}
\dim \mathfrak{g} = \frac{2(h^\vee +1)(5h^\vee -6)}{h^\vee +6},
\end{equation}
and
$
\dim L(2\theta) = 5(h^\vee)^2(2h^\vee+3)(5h^\vee-6)/(h^\vee+12)(h^\vee+6)
$
(\cite{CdM}, \cite{D} and \cite{LM02}).
Here,
$L(2\theta)$ is the irreducible highest weight module of weight $2\theta$ over $\mathfrak{g}$.
In the vertex algebra setting,
the first example of such phenomena
was first observed
in \cite{T}, where the coefficients of the MLDEs which the characters of the affine vertex algebras $V_1(\mathfrak{g})$ at level~$1$
associated with the Deligne exceptional series satisfy are expressed as rational functions in  $h^\vee$.
The second example of such phenomena was found
in \cite{K}, where 
the minimal affine $\mathcal{W}$-algebras associated with the Deligne exceptional series at level $-h^\vee/6$ were shown to be lisse and rational.
\end{Rem}

\begin{Rem}
It follows from a result of~\cite{KNS} that
there is a~vacuum type solution of~\eqref{eqn:diff1} with $h^\vee=24$.
Although the Coxeter number $h^\vee$ of any Lie algebra in the Deligne exceptional series does not coincide with $24$,
the number ``$h^\vee=24$" appears in many studies of the Deligne exceptional series (see e.g.~\cite{CdM} and \cite{K14}).
\end{Rem}

\newcommand{\etalchar}[1]{$^{#1}$}


\end{document}